\documentclass[10pt, reqno]{amsart}
\usepackage{amsmath, amsthm, amscd, amsfonts, amssymb, graphicx, color}
\usepackage[bookmarksnumbered, colorlinks, plainpages]{hyperref}
\hypersetup{colorlinks=true,linkcolor=red, anchorcolor=green, citecolor=cyan, urlcolor=red, filecolor=magenta, pdftoolbar=true}
\usepackage{mathrsfs}

\newtheorem{theorem}{Theorem}[section]
\newtheorem{lemma}[theorem]{Lemma}

\newtheorem{corollary}[theorem]{Corollary}
\theoremstyle{definition}
\newtheorem{definition}[theorem]{Definition}

\theoremstyle{remark}
\newtheorem{remark}[theorem]{Remark}
\numberwithin{equation}{section}

\begin{document}
\setcounter{page}{1}

\title[Number fields as curves over $\mathbf{F}_1$]{Number fields as curves over $\mathbf{F}_1$}

\author[Igor V. Nikolaev]
{Igor V. Nikolaev$^1$}

\address{$^{1}$ Department of Mathematics and Computer Science, St.~John's University, 8000 Utopia Parkway,  
New York,  NY 11439, United States.}
\email{\textcolor[rgb]{0.00,0.00,0.84}{igor.v.nikolaev@gmail.com}}

\dedicatory{All data are available as part of the manuscript}

\subjclass[2010]{Primary 11M55; Secondary 46L85.}

\keywords{Field with one element, noncommutative tori, function fields.}


\begin{abstract}
We study function fields in one variable over the field with one element $\mathbf{F}_1$.
It is proved that the Galois extensions of $\mathbf{Q}$ are isomorphic to the  curves over $\mathbf{F}_1$ being 
understood as the Deitmar schemes.  Specifically, one gets  explicit formulas linking the genus and the number of cusps 
of an algebraic curve over the extension  $\mathbf{F}_{1^m}$ of order  $m\ge 1$  of the  field $\mathbf{F}_1$  and  the $m$-th roots  of unity of  
the corresponding number field.  It follows that elliptic curves with one cusp over $\mathbf{F}_1$ are  either  the cyclotomic fields or
the maximal abelian unramified extensions of the quadratic number fields. 
Our  proof depends  on representaion of the Drinfeld modules by the bounded linear operators on a Hilbert space
and the crossed product structure of the Cuntz-Krieger algebras. 
\end{abstract}

\maketitle

\section{Introduction}
An  interplay  between  number fields and  the fields of rational functions 
in one variable is long known  [Dedekind \& Weber 1882] \cite{DedWeb1}. 
For instance, both fields are the Dedekind domains and their groups of units are finitely generated, if
 functions  are defined over the finite field $\mathbf{F}_{q}$.  
Likewise,  the class field theory is  essentially the same in  both cases  [Artin \& Whaples 1945]  \cite{ArtWap1}. 
Whether  or not  these relations  can be upgraded to a field  isomorphism  is an  interesting open problem.  

Let  $\mathbf{F}_1$ be the field with one element [Tits 1957] \cite[Section 13]{Tit1}.
Such an object is no longer  a field in the usual sense, since the axiom $0\ne 1$ fails for $\mathbf{F}_1$. 
Yet the idea of an ``absolute'' field  $\mathbf{F}_1$  is natural and powerful, see e.g. 
[Manin 1995] \cite[Section 1.6]{Man1},  [Kapranov \& Smirnov 1995] \cite{KapSmi1}, 
[Soul\'e 2004] \cite{Sou1},  [Deitmar 2005] \cite{Dei1}, [Connes, Consani \& Marcolli 2009] \cite{ConConMar1}, [Lorscheid 2018] \cite{Lor1} and others.
Using the Cuntz-Krieger algebras $\mathcal{O}_A$  [Cuntz \& Krieger 1980] \cite{CunKri1},
one can construct a covariant functor $V(\mathbf{F}_1)\to \mathcal{O}_A$,
where $V(\mathbf{F}_1)$ is the  projective variety over $\mathbf{F}_1$ \cite{Nik2}.

The aim of our note is an isomorphism between number fields and function fields in one variable over $\mathbf{F}_1$ (Theorem \ref{thm1.2}).
To formalize our results,   denote by $S_{g,n}$ the Riemann surface of genus $g\ge 0$ with $n\in \{1, 2, 3\}$ cusps. 
Let $\mathbb{A}(S_{g,n})$ and $\mathbb{A}_p(S_{g,n})$ be the cluster $C^*$-algebra
of $S_{g,n}$  and the congruence sub-algebra of $\mathbb{A}(S_{g,n})$ of level $p$, 
respectively \cite[Section 2.2.1]{Nik1}.  
Consider an isomorphism $\mathbb{A}_p(S_{g,n})/I_{\alpha}\cong \mathbb{A}_{RM}^{6g-6+2n}$,
where $\{I_{\alpha} ~|~\alpha\in\mathbf{R}^{6g-7+2n}\}$ is the maximal two-sided ideal of 
$\mathbb{A}_p(S_{g,n})$ and  $\mathbb{A}_{RM}^{6g-6+2n}$ is an enveloping 
$\operatorname{AF}$-algebra of the noncommutative torus   $\mathscr{A}_{RM}^{6g-6+2n}$
with real multiplication (RM) [Rieffel 1990] \cite{Rie1};   we refer the reader to  \cite[Section 2.2.2]{Nik1} 
for the notation and details. 
Let $\operatorname{Drin}_A^{r}(k)$ be the Drinfeld module of rank $r\ge 1$, where $A=\mathbf{F}_q[T]$ and 
$k= \mathbf{F}_q(T)$ [Rosen 2002] \cite[p. 200]{R}. 
Recall that: (i) there exists a functor $F: \operatorname{Drin}_A^{3g-3+n}(k)\to \mathscr{A}_{RM}^{6g-6+2n}$
from the category of Drinfeld modules to such of the noncommutative tori; 
(ii) $F(\Lambda_{\rho}[a])=\{e^{2\pi i\alpha_k+\log\log\varepsilon} ~|~1\le k\le 6g-7+2n\}$, 
where  $\Lambda_{\rho}[a]$ is the torsion submodule and $\varepsilon$ a unit in the number field generated by $\alpha_k$
and (iii) the Galois group  
 $G:=\operatorname{Gal} \left(\mathbf{k}(e^{2\pi i\alpha_k+\log\log\varepsilon})  ~| ~\mathbf{k}\right)\subseteq \operatorname{GL}_{3g-3+n}\left(A/aA\right)$,
where $\mathbf{k}$  is a subfield of  the number field $\mathbf{Q}(e^{2\pi i\alpha_k+\log\log \varepsilon} )$ \cite[Theorem 3.3]{Nik1}.
Moreover, the number field $\mathbf{K}\cong\mathbf{k}(e^{2\pi i\alpha_k+\log\log\varepsilon})$  ($\mathbf{K}\cong\mathbf{k}(\cos 2\pi\alpha_k\times\log\varepsilon)$, resp.)
is the Galois extension of $\mathbf{k}\subset\mathbf{C}-\mathbf{R}$  ($\mathbf{k}\subset\mathbf{R}$, resp.), such that  
$\operatorname{Gal}(\mathbf{K}|\mathbf{k})\cong G$ \cite[Corollary 3.4]{Nik1}. 
On the other hand, 
let  $\mathscr{K}$ be the $C^*$-algebra of compact operators on a Hilbert space and consider the stabilized 
Cuntz-Krieger algebra $\mathcal{O}_A\otimes\mathscr{K}$.  Roughly speaking, a link between number fields and function 
fields in one variable over $\mathbf{F}_1$ is derived from the well-known $C^*$-algebra  isomorphism:
\begin{equation}\label{eq1.1}
 \mathcal{O}_A\otimes\mathscr{K}\cong \mathbb{A}_{RM}^{6g-6+2n}\rtimes_{\sigma_A}\mathbf{Z},
 \end{equation}
where  $\mathbb{A}_V$ is a stationary $AF$-algebra given by a matrix $A\in \operatorname{GL}_{6g-6+2n}(\mathbf{Z})$ [Blackadar 1986]  \cite[Chapter 7]{B} 
and the crossed product $\rtimes$ is taken  by the shift automorphism $\sigma_A$ of $\mathbb{A}_{RM}^{6g-6+2n}$ [Blackadar 1986] \cite[Exercise 10.11.9 (b)]{B}.
Specifically, we identify the Riemann surface $S_{g,n}$ with the complex points of a curve 
in the affine space $\mathbf{A}^{6g-6+2n}$ given the system of polynomial equations
$\{ f_j(x_1,\dots, x_{6g-6+2n}) = 0 ~|~1\le j\le  6g-7+2n\}$. 
The norm closure of a self-adjoint representation of such  equations 
by the bounded linear operators on a Hilbert space generates  the two-sided ideal 
$I_{\alpha}\subset \mathbb{A}(S_{g,n})$ introduced earlier.  
Likewise, we  let $\mathbf{F}_{1^m}$ be an extension of order $m\ge 1$ of $\mathbf{F}_1$ and
$\mathbf{F}_{1^m}[x_1,\dots, x_{6g-6+2n}]$ be polynomials  over $\mathbf{F}_{1^m}$. 
Neither  $\mathbf{F}_{1^m}$ is a field nor $\mathbf{F}_{1^m}[x_1,\dots, x_{6g-6+2n}]$ is a ring
[Kapranov \& Smirnov 1995] \cite[Section 1.2]{KapSmi1} and [Soul\'e 2004] \cite[Section 2.4]{Sou1}.  
The set $\{\mathbf{F}_1[T] ~|~T=(x_1,\dots x_{6g-6+2n})\}$ 
has only the structure of a multiplicative monoid. 
However, one can take  a localization $T^{-1}$ in $T$ consisting of $0$ 
and all integer powers of $T$; this is  denoted by  $\mathbf{F}_1[T^{\pm 1}]$.
To obtain a ring from  $\mathbf{F}_1[T^{\pm 1}]$, one needs to take the base extension $\otimes_{\mathbf{F}_1}\mathbf{Z}$
to the ring of Laurent polynomials with the integer coefficients [Lorscheid 2018] \cite[Example 2.3]{Lor1}, i.e. 
$\mathbf{F}_1[T^{\pm 1}]\otimes_{\mathbf{F}_1}\mathbf{Z}\cong \mathbf{Z}[T^{\pm 1}]$. 
The  operation $\otimes_{\mathbf{F}_1}\mathbf{Z}$ is a functor sending morphisms between monoids 
to the ring homomorphisms [Deitmar 2005] \cite[Section 1]{Dei1};
hence the following notation. 
\begin{definition}\label{dfn1.1}
We shall denote by  $\mathbf{F}_{1}[x_1^{\pm 1},\dots, x_{6g-6+2n}^{\pm 1}]$
a monoid, such that: 
$\mathbf{F}_{1}[x_1^{\pm 1},\dots, x_{6g-6+2n}^{\pm 1}]\otimes_{\mathbf{F}_1}\mathbf{Z}\cong 
\mathcal{A}(\mathbf{x},  S_{g,n})
\subseteq \mathbf{Z}[x_1^{\pm 1},\dots, x_{6g-6+2n}^{\pm 1}]$.
Likewise, we  use symbol $[S_{g,n}]$ for the maximal ideal [Deitmar 2005] \cite[Section 1.2]{Dei1} of 
\linebreak
$\mathbf{F}_{1}[x_1^{\pm 1},\dots, x_{6g-6+2n}^{\pm 1}]$ obtained from
the inclusion of rings  $K_0(I_{\alpha})\subset \mathcal{A}(\mathbf{x},  S_{g,n})$. 
The ideal  $[S_{g,n}]$ is called palindromic (apalindromic, resp.),  
if $[S_{g,n}]\otimes_{\mathbf{F}_1}\mathbf{Z}$  consists of the Laurent polynomials  $p(x_1^{\mp 1},\dots, x_{6g-6+2n}^{\mp 1})=p(x_1^{\pm 1},\dots, x_{6g-6+2n}^{\pm 1})$
\linebreak
($p(x_1^{\mp 1},\dots, x_{6g-6+2n}^{\mp 1}) \ne p(x_1^{\pm 1},\dots, x_{6g-6+2n}^{\pm 1})$, resp.).
By a function field in one variable over $\mathbf{F}_1$ one understands 
the quotient field
\linebreak
 $\mathbf{F}_{1}[x_1^{\pm 1},\dots, x^{\pm 1}_{6g-6+2n}]\otimes_{\mathbf{F}_1}\mathbf{Z} ~/~[S_{g,n}]\otimes_{\mathbf{F}_1}\mathbf{Z}$. 
\end{definition}
Our main results can be formulated as follows. 
\begin{theorem}\label{thm1.2}
For each $m\ge 1$, the following fields are isomorphic:
\begin{equation} 
\begin{array}{lcr}
 \mathbf{F}_{1^m}[x_1^{\pm 1},\dots, x^{\pm 1}_{6g-6+2n}] \otimes_{\mathbf{F}_1}\mathbf{Z}~/~[S_{g,n}]\otimes_{\mathbf{F}_1}\mathbf{Z}
\cong&&\\
&&\\
\cong
\begin{cases} \mathbf{Q}\left(e^{2\pi i\alpha_k +\log\log\varepsilon} ~|~1\le k\le 6g-7+2n \right)
(e^{\frac{2\pi i}{m}}), & if ~[S_{g,n}] ~is\cr
& apalindromic,\cr
               \mathbf{Q}\left(\cos 2\pi(\alpha_k +\frac{1}{m})\times\log\varepsilon ~|~1\le k\le 6g-7+2n\right), 
               &  if ~[S_{g,n}] ~is\cr
& palindromic.
\nonumber
\end{cases}               
&&
\end{array}
\end{equation}
\end{theorem}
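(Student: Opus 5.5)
The plan is to pass through the $C^*$-algebraic side of the isomorphism (\ref{eq1.1}), working at $K_0$-level in three steps: the case $m=1$ with its two parities, then the effect of the base extension $\mathbf{F}_1\subset\mathbf{F}_{1^m}$, and finally a comparison of generators.

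\textbf{Step 1: the case $m=1$.} By Definition \ref{dfn1.1}, the ring $\mathbf{F}_1[x_1^{\pm1},\dots,x_{6g-6+2n}^{\pm1}]\otimes_{\mathbf{F}_1}\mathbf{Z}$ is the Laurent-phenomenon algebra $\mathcal{A}(\mathbf{x},S_{g,n})$. This algebra is $K_0$ of the cluster $C^*$-algebra $\mathbb{A}(S_{g,n})$, or of its congruence subalgebra $\mathbb{A}_p(S_{g,n})$. The ideal $[S_{g,n}]\otimes_{\mathbf{F}_1}\mathbf{Z}$ is $K_0(I_\alpha)$. Since $I_\alpha$ is a closed two-sided ideal, the six-term exact sequence together with $\mathbb{A}_p(S_{g,n})/I_\alpha\cong\mathbb{A}^{6g-6+2n}_{RM}$ identifies the quotient ring with $K_0(\mathbb{A}^{6g-6+2n}_{RM})$. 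That group is the $\mathbf{Z}$-module $\mathbf{Z}[\alpha_1,\dots,\alpha_{6g-7+2n}]$ in the real quadratic or real-multiplication number field. I would then use (\ref{eq1.1}) and the Pimsner–Voiculescu sequence for the shift $\sigma_A$ to obtain $K_0(\mathcal{O}_A)\cong\mathbf{Z}^{6g-6+2n}/(I-A)$. This upgrades the additive identification to a ring identification, in which multiplication by the unit $\varepsilon$ (the Perron–Frobenius eigenvalue of $A$) becomes the action of $\sigma_A$.

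Passing to the quotient field and invoking the functor $F$ of \cite[Theorem 3.3]{Nik1} together with \cite[Corollary 3.4]{Nik1}, the quotient field becomes $\mathbf{k}(e^{2\pi i\alpha_k+\log\log\varepsilon})$. The parity of $[S_{g,n}]$ decides which form appears. In the palindromic case, the relation $p(x^{-1})=p(x)$ makes the image invariant under the involution $x_k\mapsto x_k^{-1}$. On the torsion points $e^{2\pi i\alpha_k+\log\log\varepsilon}$ this involution acts as complex conjugation. Taking fixed points therefore lands in the real subfield generated by $\cos 2\pi\alpha_k\times\log\varepsilon$. In the apalindromic case no such symmetry is available, and one obtains the full field.

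\textbf{Step 2: the base extension to $\mathbf{F}_{1^m}$.} Following Kapranov–Smirnov, I would set $\mathbf{F}_{1^m}\otimes_{\mathbf{F}_1}\mathbf{Z}=\mathbf{Z}[\mu_m]$. Tensoring is exact for this flat extension, so the ring becomes $\mathcal{A}(\mathbf{x},S_{g,n})\otimes\mathbf{Z}[\zeta_m]$ and the quotient field becomes the compositum with $\mathbf{Q}(e^{2\pi i/m})$. In the apalindromic case this gives exactly the first line of the theorem. In the palindromic case, the conjugation-invariance of Step 1 forces $\zeta_m$ to enter only through real combinations of the generators. The key identity here is
\[
\cos 2\pi\bigl(\alpha_k+\tfrac1m\bigr)=\cos 2\pi\alpha_k\cos\tfrac{2\pi}{m}-\sin 2\pi\alpha_k\sin\tfrac{2\pi}{m}.
\]
It suggests that adjoining $\zeta_m$ and then taking conjugation-fixed points amounts to shifting $\alpha_k\mapsto\alpha_k+1/m$. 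Concretely, I would replace the torsion point $e^{2\pi i\alpha_k}$ by $e^{2\pi i\alpha_k}\zeta_m$ and then take real parts.

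\textbf{Step 3: comparing generators.} Two inclusions are needed to show the fields coincide. First, the generators $\cos 2\pi(\alpha_k+\frac1m)\log\varepsilon$ lie in the fixed field. Second, the fixed field is generated by them, which I would verify by a degree count via Galois theory using $G\subseteq \mathrm{GL}_{3g-3+n}(A/aA)$.

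The main obstacle is the multiplicative identification in Step 1. The $K$-theory computation only yields groups, whereas the theorem asserts an isomorphism of fields. I would first try to use the order-unit and the $\sigma_A$-module structure to pin down the ring structure. For the palindromic case, the second inclusion in Step 3 (that the fixed field is no larger) is the other delicate point.
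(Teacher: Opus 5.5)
Your proposal has a genuine gap in Step 2 for the palindromic case, and it carries over to Step 3. By your own Step 1, at $m=1$ the palindromic quotient field is already the real field $L=\mathbf{Q}(\cos 2\pi\alpha_k\times\log\varepsilon \mid 1\le k\le 6g-7+2n)$. Extending scalars by roots of unity then gives $L(e^{2\pi i/m})$, whose conjugation-fixed subfield is $L(\cos\frac{2\pi}{m})$. That is not the field in the theorem.

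The identity you quote shows why. $\cos 2\pi(\alpha_k+\frac1m)\times\log\varepsilon$ contains the term $\sin\frac{2\pi}{m}\,\sin 2\pi\alpha_k\times\log\varepsilon$, and nothing in $L(\cos\frac{2\pi}{m})$ supplies it. For $m=4$ your construction returns $L$ itself, whereas the theorem asserts $\mathbf{Q}(\sin 2\pi\alpha_k\times\log\varepsilon)$. Nothing in your set-up makes these agree.

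So the first inclusion in Step 3 is unsupported, and a degree bound from $G\subseteq\operatorname{GL}_{3g-3+n}(A/aA)$ cannot repair it. Your ``concrete'' recipe (replace $e^{2\pi i\alpha_k}$ by $e^{2\pi i\alpha_k}\zeta_m$, then take real parts) is not what scalar extension does. It shifts the generators \emph{before} the real part is taken, and your set-up has no mechanism producing that shift.

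A smaller point: the Kapranov--Smirnov ring is $\mathbf{F}_{1^m}\otimes_{\mathbf{F}_1}\mathbf{Z}=\mathbf{Z}[t]/(t^m-1)$, not $\mathbf{Z}[\zeta_m]$. After $\otimes\mathbf{Q}$ it splits as $\prod_{d\mid m}\mathbf{Q}(\zeta_d)$, so even in the apalindromic case base change gives a product of fields. You must project to the primitive factor. Writing $\mathbf{K}$ for the $m=1$ field, $\mathbf{K}\otimes_{\mathbf{Q}}\mathbf{Q}(\zeta_m)$ is a field only under linear disjointness.

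The missing idea is the paper's Lemma \ref{lm3.3}: the extension $\mathbf{F}_1\subseteq\mathbf{F}_{1^m}$ is realized on the noncommutative-torus side, not by extension of scalars. An isogeny $f_m$ of degree $m$ of Drinfeld modules is sent by the functor $F$ of \cite[Theorem 3.3]{Nik1} to a morphism $\widetilde{\mathscr{A}}_{RM}^{6g-6+2n}\to\mathscr{A}_{RM}^{6g-6+2n}$ with $\widetilde{\alpha}_k=\alpha_k+\frac1m$. The torsion image becomes $e^{2\pi i(\alpha_k+\frac1m)+\log\log\varepsilon}$.

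Through the diagram of Figure 1, the degree-$m$ extension corresponds to an index-$m$ subgroup of $K_0^+$. Hence the $\mathbf{F}_{1^m}$-function field is $\exp[K_0^+(\widetilde{\mathscr{A}}_{RM}^{6g-6+2n})]\otimes\mathbf{Q}$. The parity dichotomy of Lemma \ref{lm3.2} is applied only \emph{after} this shift. Multiplicativity of $\exp$ gives the first line, and real parts of the shifted generators give the second line directly.

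Your Step 1 otherwise tracks Lemma \ref{lm3.2}, but the Pimsner--Voiculescu detour cannot deliver the multiplicative structure you want. $\mathbf{Z}^{6g-6+2n}/(I-A)$ is the group of $\sigma_A$-coinvariants, finite when $\det(I-A)\neq 0$, and it carries no product. At best, $\sigma_A$ makes $K_0(\mathbb{A}_{RM}^{6g-6+2n})$ a $\mathbf{Z}[\varepsilon^{\pm 1}]$-module. The paper does not extract the field structure from $K$-theory. It imports it from \cite[Corollary 3.4]{Nik1} via $\exp[K_0^+]$, together with the invariance in Lemma \ref{lm3.1}(ii).
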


\medskip
\begin{remark}
Theorem \ref{thm1.2} says  that the fields  $\mathbf{F}_{1^m}$ are one-to-one with  the  
roots  of unity of order $m\ge 1$;  this was conjectured in   [Kapranov \& Smirnov 1995] \cite[Section 1.2]{KapSmi1}.
\end{remark}

\medskip
\begin{definition}
The number fields 
$\mathbf{Q}\left(e^{2\pi i\alpha_k +\log\log\varepsilon} ~|~1\le k\le 6g-7+2n \right)$ and 
 $\mathbf{Q}\left(\cos 2\pi\alpha_k \times\log\varepsilon ~|~1\le k\le 6g-7+2n\right)$
 are said to be of the topological type $(g,n)$. 
 \end{definition}

\begin{corollary}\label{cor1.5}
 The number fields of topological type $(1,1)$  are  abelian extensions
 of the form:
\begin{equation}\label{eq1.2}
\begin{array}{lcr}
 \mathbf{K} \cong
\begin{cases} \mathbf{Q}\left(e^{2\pi i\alpha +\log\log\varepsilon}\right)
(e^{\frac{2\pi i}{m}}), & if  ~\mathbf{K}\subset\mathbf{C}-\mathbf{R} \cr
               \mathbf{Q}\left(\cos 2\pi(\alpha +\frac{1}{m})\times\log\varepsilon\right), 
               &  if ~\mathbf{K}\subset\mathbf{R}.
\end{cases}               
\end{array}
\end{equation}
  \end{corollary}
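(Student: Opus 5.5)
The corollary is the special case $g=n=1$ of Theorem \ref{thm1.2}, supplemented by an abelianness statement for the Galois group. The plan has three steps: specialize the numerics, match the palindromic dichotomy with the real/non-real dichotomy, and then bound the Galois group.

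\emph{Specializing the numerics.} For $g=n=1$ we have $6g-6+2n=2$ and $6g-7+2n=1$. Thus the affine space is $\mathbf{A}^2$, the monoid is $\mathbf{F}_{1^m}[x_1^{\pm1},x_2^{\pm1}]$, and there is a single parameter $\alpha=\alpha_1$ together with one unit $\varepsilon$ of the real quadratic field generated by $\alpha$. Substituting $k=1$ in Theorem \ref{thm1.2} gives directly the two fields $\mathbf{Q}(e^{2\pi i\alpha+\log\log\varepsilon})(e^{2\pi i/m})$ and $\mathbf{Q}(\cos 2\pi(\alpha+\frac1m)\times\log\varepsilon)$.

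\emph{Matching the two dichotomies.} We must replace the palindromic/apalindromic alternative by the condition $\mathbf{K}\subset\mathbf{R}$ or $\mathbf{K}\subset\mathbf{C}-\mathbf{R}$. The intended argument is as follows.
\begin{itemize}
\item The ideal $[S_{1,1}]\otimes_{\mathbf{F}_1}\mathbf{Z}$ is palindromic exactly when it is invariant under $x_j\mapsto x_j^{-1}$.
\item Under the functor $F$ of \cite[Theorem 3.3]{Nik1}, the generators $x_j$ go to the numbers $e^{2\pi i\alpha+\log\log\varepsilon}$.
\item On these numbers, the inversion $x\mapsto x^{-1}$ acts (after the normalization by $\log\varepsilon$) as complex conjugation $\alpha\mapsto-\alpha$.
\item Invariance of the generated field under conjugation means the field is generated by the symmetric functions $x+x^{-1}$, that is, by $\cos 2\pi\alpha\times\log\varepsilon$. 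Such a field is real.
\item In the apalindromic case conjugation is not a symmetry of the field, so $\mathbf{K}\not\subset\mathbf{R}$.
\end{itemize}
This is the same correspondence used in \cite[Corollary 3.4]{Nik1}, where the case $\mathbf{k}\subset\mathbf{R}$ gives $\mathbf{K}\cong\mathbf{k}(\cos 2\pi\alpha_k\times\log\varepsilon)$. We simply cite it there.

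\emph{Abelianness.} By \cite[Theorem 3.3]{Nik1}, $G\subseteq \mathrm{GL}_{3g-3+n}(A/aA)$, and here $3g-3+n=1$. Hence
\[
G\subseteq \mathrm{GL}_1(A/aA)=(A/aA)^\times,
\]
which is commutative. So $\mathbf{K}$ is abelian over $\mathbf{k}$. Adjoining $e^{2\pi i/m}$ (respectively shifting $\alpha$ by $1/m$) composes with a cyclotomic extension, and a compositum of abelian extensions is abelian.

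\emph{Main obstacle.} To conclude that $\mathbf{K}$ is abelian over $\mathbf{Q}$, and not merely over $\mathbf{k}$, we must control the base $\mathbf{k}$. In the rank-one (Carlitz-type) case, $\mathbf{k}$ should be either $\mathbf{Q}$ or the quadratic field $\mathbf{Q}(\alpha)$. When $\mathbf{k}=\mathbf{Q}(\alpha)$, the statement should be read as "abelian over the quadratic field", consistent with the abstract's description of these fields as cyclotomic or maximal unramified abelian extensions of quadratic fields. I would state this reading explicitly and justify it from Corollary 3.4 of \cite{Nik1}.
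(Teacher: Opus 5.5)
Your route is the paper's: specialize Theorem \ref{thm1.2} to $g=n=1$, get abelianness from $3g-3+n=1$ and the commutativity of $\mathrm{GL}_1(A/aA)=(A/aA)^\times$, and take the real/non-real split from \cite[Corollary 3.4]{Nik1}. That last step is exactly step (iii) of the paper's proof. Your bullet heuristic should use $x+\bar x$ rather than $x+x^{-1}$, since $x^{-1}=\bar x/(\log\varepsilon)^2$; as you end up citing that corollary anyway, this is harmless. The paper also checks that $3g-3+n=1$ with $n\in\{1,2,3\}$ forces $(g,n)=(1,1)$, but you only need, and only use, the direction $(1,1)\Rightarrow$ rank one.

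The substantive divergence is the base field. The paper quotes the inclusion directly as $\operatorname{Gal}(\mathbf{Q}(e^{2\pi i\alpha+\log\log\varepsilon})\,|\,\mathbf{Q})\subseteq\mathrm{GL}_1(A/aA)$ and concludes ``abelian over $\mathbf{Q}$'' without discussing $\mathbf{k}$. You retreat to ``abelian over $\mathbf{k}$''. Your caution is well founded, for three reasons:
\begin{itemize}
\item The introduction states \cite[Theorem 3.3]{Nik1} over $\mathbf{k}$.
\item In the non-real case the base in \cite[Corollary 3.4]{Nik1} is a non-real field $\mathbf{k}$, so it is not $\mathbf{Q}$.
\item A Hilbert class field of an imaginary quadratic field, as in the table of Section 4, is in general not abelian over $\mathbf{Q}$; for $\mathbf{Q}(\sqrt{-23})$ its group over $\mathbf{Q}$ is $S_3$.
\end{itemize}
Still, state explicitly that you are proving the relative statement, not the conclusion drawn in the paper's proof.

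The step that does not work as written is the $m$-twist in the real case. The field $\mathbf{Q}(\cos 2\pi(\alpha+\frac1m)\times\log\varepsilon)$ is generated by the single number $\frac12(\zeta_m x+\bar\zeta_m\bar x)$, where $x=e^{2\pi i\alpha}\log\varepsilon$ and $\zeta_m=e^{2\pi i/m}$. This number mixes $\cos 2\pi\alpha$ and $\sin 2\pi\alpha$. So the field is not in general the compositum of $\mathbf{Q}(\cos 2\pi\alpha\times\log\varepsilon)$ with a cyclotomic field, and ``a compositum of abelian extensions is abelian'' does not apply. In the non-real case your compositum argument is fine.

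In the paper's framework the twist is handled by Lemma \ref{lm3.3}. The shifted parameter $\widetilde\alpha=\alpha+\frac1m$ belongs to the torus $\widetilde{\mathscr{A}}_{RM}^{2}$ that $F$ attaches to the isogenous module $\widetilde{\operatorname{Drin}}_A^{1}(k)$, which again has rank one. So the same $\mathrm{GL}_1(A/aA)$ bound from \cite[Theorem 3.3]{Nik1} applies directly to the twisted field, in both the real and non-real cases, and no compositum argument is needed.
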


\medskip
The paper is organized as follows.  A brief review of the preliminary facts is 
given in Section 2. Theorem \ref{thm1.2} and Corollary \ref{cor1.5}
are   proved in Section 3.  The number fields of topological type $(1,1)$ 
are  considered in Section 4.

\section{Preliminaries}

\subsection{$C^*$-algebras}
The $C^*$-algebra is an algebra  $\mathscr{A}$ over $\mathbf{C}$ with a norm 
$a\mapsto ||a||$ and an involution $\{a\mapsto a^* ~|~ a\in \mathscr{A}\}$  such that $\mathscr{A}$ is
complete with  respect to the norm, and such that $||ab||\le ||a||~||b||$ and $||a^*a||=||a||^2$ for every  $a,b\in \mathscr{A}$.  
Each commutative $C^*$-algebra is  isomorphic
to the algebra $C_0(X)$ of continuous complex-valued
functions on some locally compact Hausdorff space $X$. 
Any other  algebra $\mathscr{A}$ can be thought of as  a noncommutative  
topological space. 

\subsubsection{K-theory of $C^*$-algebras}
By $M_{\infty}(\mathscr{A})$ 
one understands the algebraic direct limit of the $C^*$-algebras 
$M_n(\mathscr{A})$ under the embeddings $a\mapsto ~\mathbf{diag} (a,0)$. 
The direct limit $M_{\infty}(\mathscr{A})$  can be thought of as the $C^*$-algebra 
of infinite-dimensional matrices whose entries are all zero except for a finite number of the
non-zero entries taken from the $C^*$-algebra $\mathscr{A}$.
Two projections $p,q\in M_{\infty}(\mathscr{A})$ are equivalent, if there exists 
an element $v\in M_{\infty}(\mathscr{A})$,  such that $p=v^*v$ and $q=vv^*$. 
The equivalence class of projection $p$ is denoted by $[p]$.   
We write $V(\mathscr{A})$ to denote all equivalence classes of 
projections in the $C^*$-algebra $M_{\infty}(\mathscr{A})$, i.e.
$V(\mathscr{A}):=\{[p] ~:~ p=p^*=p^2\in M_{\infty}(\mathscr{A})\}$. 
The set $V(\mathscr{A})$ has the natural structure of an abelian 
semi-group with the addition operation defined by the formula 
$[p]+[q]:=\mathbf{diag}(p,q)=[p'\oplus q']$, where $p'\sim p, ~q'\sim q$ 
and $p'\perp q'$.  The identity of the semi-group $V(\mathscr{A})$ 
is given by $[0]$, where $0$ is the zero projection. 
By the $K_0$-group $K_0(\mathscr{A})$ of the unital $C^*$-algebra $\mathscr{A}$
one understands the Grothendieck group of the abelian semi-group
$V(\mathscr{A})$, i.e. a completion of $V(\mathscr{A})$ by the formal elements
$[p]-[q]$.  The image of $V(\mathscr{A})$ in  $K_0(\mathscr{A})$ 
is a positive cone $K_0^+(\mathscr{A})$ defining  the order structure $\le$  on the  
abelian group  $K_0(\mathscr{A})$. The pair   $\left(K_0(\mathscr{A}),  K_0^+(\mathscr{A})\right)$
is known as a dimension group of the $C^*$-algebra $\mathscr{A}$.

\subsubsection{$\operatorname{AF}$-algebras}
An {\it AF-algebra}  (Approximately Finite-dimensional $C^*$-algebra) is defined to
be the  norm closure of an ascending sequence of   finite dimensional
$C^*$-algebras $M_n$,  where  $M_n$ is the $C^*$-algebra of the $n\times n$ matrices
with entries in $\mathbf{C}$. Here the index $n=(n_1,\dots,n_k)$ represents
the  semi-simple matrix algebra  $M_i=M_{i_1}\oplus\dots\oplus M_{i_k}$.
The ascending sequence mentioned above  can be written as 
$M_{i_1}\buildrel\rm\varphi_1\over\longrightarrow M_{i_2}
   \buildrel\rm\varphi_2\over\longrightarrow\dots$,
where $M_{i_k}$ are the finite dimensional $C^*$-algebras and
$\varphi_i$ the homomorphisms between such algebras.  
If $\varphi_i=Const$, then the AF-algebra $\mathscr{A}$ is called 
{\it stationary}. 
The homomorphisms $\varphi_i$ can be arranged into  a graph as follows. 
Let  $M_i=M_{i_1}\oplus\dots\oplus M_{i_k}$ and 
$M_{i'}=M_{i_1'}\oplus\dots\oplus M_{i_k'}$ be 
the semi-simple $C^*$-algebras and $\varphi_i: M_i\to M_{i'}$ the  homomorphism. 
One has  two sets of vertices $V_{i_1},\dots, V_{i_k}$ and $V_{i_1'},\dots, V_{i_k'}$
joined by  $a_{rs}$ edges  whenever the summand $M_{i_r}$ contains $a_{rs}$
copies of the summand $M_{i_s'}$ under the embedding $\varphi_i$. 
As $i$ varies, one obtains an infinite graph called the   Bratteli diagram of the
AF-algebra.  The matrix $A=(a_{rs})$ is known as  a  partial multiplicity matrix;
an infinite sequence of $A_i$ defines a unique AF-algebra.
If   $\mathbb{A}$ is a stationary AF-algebra, then   $A_i=Const$
for all $i\ge 1$.  
The  dimension group $\left(K_0(\mathbb{A}),  K_0^+(\mathbb{A})\right)$  is a complete invariant of the
Morita equivalence class of the AF-algebra $\mathbb{A}$.

\subsection{Cluster $C^*$-algebras}
The cluster algebra  of rank $n$ 
is a subring  $\mathcal{A}(\mathbf{x}, B)$  of the field  of  rational functions in $n$ variables
depending  on  variables  $\mathbf{x}=(x_1,\dots, x_n)$
and a skew-symmetric matrix  $B=(b_{ij})\in M_n(\mathbf{Z})$.
The pair  $(\mathbf{x}, B)$ is called a  seed.
A new cluster $\mathbf{x}'=(x_1,\dots,x_k',\dots,  x_n)$ and a new
skew-symmetric matrix $B'=(b_{ij}')$ is obtained from 
$(\mathbf{x}, B)$ by the   exchange relations [Williams 2014]  \cite[Definition 2.22]{Wil1}:
\begin{eqnarray}
x_kx_k'  &=& \prod_{i=1}^n  x_i^{\max(b_{ik}, 0)} + \prod_{i=1}^n  x_i^{\max(-b_{ik}, 0)},\cr \nonumber
b_{ij}' &=& 
\begin{cases}
-b_{ij}  & \mbox{if}   ~i=k  ~\mbox{or}  ~j=k\cr
b_{ij}+{|b_{ik}|b_{kj}+b_{ik}|b_{kj}|\over 2}  & \mbox{otherwise.}
\end{cases}
\end{eqnarray}
The seed $(\mathbf{x}', B')$ is said to be a  mutation of $(\mathbf{x}, B)$ in direction $k$.
where $1\le k\le n$.  The  algebra  $\mathcal{A}(\mathbf{x}, B)$ is  generated by the 
cluster  variables $\{x_i\}_{i=1}^{\infty}$
obtained from the initial seed $(\mathbf{x}, B)$ by the iteration of mutations  in all possible
directions $k$.   The  Laurent phenomenon
 says  that  $\mathcal{A}(\mathbf{x}, B)\subset \mathbf{Z}[\mathbf{x}^{\pm 1}]$,
where  $\mathbf{Z}[\mathbf{x}^{\pm 1}]$ is the ring of  the Laurent polynomials in  variables $\mathbf{x}=(x_1,\dots,x_n)$
 [Williams 2014]  \cite[Theorem 2.27]{Wil1}.
In particular, each  generator $x_i$  of  the algebra $\mathcal{A}(\mathbf{x}, B)$  can be 
written as a  Laurent polynomial in $n$ variables with the   integer coefficients.

 The cluster algebra  $\mathcal{A}(\mathbf{x}, B)$  has the structure of an additive abelian
semigroup consisting of the Laurent polynomials with positive coefficients. 
In other words,  the $\mathcal{A}(\mathbf{x}, B)$ is a dimension group
[Blackadar 1986] \cite[Section 7.3]{B}.
The cluster $C^*$-algebra  $\mathbb{A}(\mathbf{x}, B)$  is   an  AF-algebra,  
such that $K_0(\mathbb{A}(\mathbf{x}, B))\cong  \mathcal{A}(\mathbf{x}, B)$.

\subsubsection{Cluster $C^*$-algebra $\mathbb{A}(S_{g,n})$}
Denote by $S_{g,n}$  the Riemann surface   of genus $g\ge 0$  with  $n\ge 0$ cusps.
 Let   $\mathcal{A}(\mathbf{x},  S_{g,n})$ be the cluster algebra 
 coming from  a triangulation of the surface $S_{g,n}$   [Williams 2014]  \cite[Section 3.3]{Wil1}. 
 We shall denote by  $\mathbb{A}(S_{g,n})$  the corresponding cluster $C^*$-algebra. 
 Let $p$ be a prime number, and denote by  $\mathcal{A}_p(S_{g,n})$ a sub-algebra of  $\mathcal{A}(S_{g,n})$
consisting of the Laurent polynomials whose coefficients are divisible by $p$. It is easy to verify that   $\mathcal{A}_p(S_{g,n})$
is again a dimension group under the addition of the Laurent polynomials. We say that  $\mathbb{A}_p(S_{g,n})$ is a
congruence sub-algebra of level $p$ of the cluster $C^*$-algebra  $\mathbb{A}(S_{g,n})$, i.e. 
$K_0(\mathbb{A}_p(S_{g,n}))\cong \mathcal{A}_p(S_{g,n})$.

Let $T_{g,n}$ be the Teichm\"uller space of the surface $S_{g,n}$,
i.e. the set of all complex structures on $S_{g,n}$ endowed with the 
natural topology. The geodesic flow $T^t: T_{g,n}\to T_{g,n}$
is a one-parameter  group of matrices $\operatorname{diag} \left( e^t, e^{-t}\right)$
acting on the holomorphic quadratic differentials on the Riemann surface $S_{g,n}$. 
$\sigma_t: \mathbb{A}(S_{g,n})\to \mathbb{A}(S_{g,n})$
called the Tomita-Takesaki flow on the AF-algebra $\mathbb{A}(S_{g,n})$. 
Denote by $Prim~\mathbb{A}(S_{g,n})$ the space of all primitive ideals 
of $\mathbb{A}(S_{g,n})$ endowed with the Jacobson topology. 
Recall (\cite{Nik2}) that each primitive ideal has a parametrization by a vector 
$\Theta\in \mathbf{R}^{6g-7+2n}$ and we write it 
$I_{\Theta}\in Prim~\mathbb{A}(S_{g,n})$.
\begin{theorem}\label{thm2.2}
{\bf (\cite{Nik0})}
There exists a homeomorphism
$h:  Prim~\mathbb{A}(S_{g,n})\times \mathbf{R}\to \{U\subseteq  T_{g,n} ~|~U~\hbox{{\sf is generic}}\},$
where $h(I_{\Theta},t)=S_{g,n}$ is 
given by the formula $\sigma_t(I_{\Theta})\mapsto S_{g,n}$;  the set $U=T_{g,n}$ if and only if
$g=n=1$.   The $\sigma_t(I_{\Theta})$
is an ideal of  $\mathbb{A}(S_{g,n})$ for all $t\in \mathbf{R}$ and 
 the quotient  algebra $AF$-algebra  $\mathbb{A}(S_{g,n})/\sigma_t(I_{\Theta}):=\mathbb{A}_{\Theta}^{6g-6+2n}$
is  a non-commutative coordinate ring  of  the Riemann surface  $S_{g,n}$.  
\end{theorem}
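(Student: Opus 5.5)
The statement to prove is the last one displayed, Theorem \ref{thm2.2}, which the paper attributes to \cite{Nik0}. I would prove it by building the map $h$ out of the Tomita--Takesaki flow $\sigma_t$ and the parametrization of primitive ideals $I_\Theta$ by $\Theta\in\mathbf{R}^{6g-7+2n}$, and then comparing this with the standard coordinates on Teichm\"uller space.

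\textbf{Step 1: coordinates on Teichm\"uller space.} Parametrize $T_{g,n}$, which has real dimension $6g-6+2n$, by measured foliations: to $S_{g,n}$ with a holomorphic quadratic differential $q$ one attaches the horizontal measured foliation of $q$. Fixing an ideal triangulation of $S_{g,n}$, this foliation is encoded by a vector of transverse measures $\lambda=(\lambda_1,\dots,\lambda_{6g-6+2n})$. Its projective class gives $6g-7+2n$ ratios $\Theta$, and its total scale gives one more parameter. The geodesic flow $T^t$ rescales $q$ by $\operatorname{diag}(e^t,e^{-t})$, so it changes only the scale parameter.

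\textbf{Step 2: primitive ideals and the flow.} Primitive ideals of the AF-algebra $\mathbb{A}(S_{g,n})$ correspond to the infinite paths, equivalently the ergodic states, of its Bratteli diagram. For the cluster algebra $\mathcal{A}(\mathbf{x},S_{g,n})$, mutations are flips of the triangulation, and these act on the $\lambda_i$ through the tropical (max-plus) exchange relations. I would use this to identify the infinite path attached to $I_\Theta$ with the Rauzy-type continued fraction expansion of the projective vector $\Theta$. The Tomita--Takesaki flow $\sigma_t$ is the modular flow of the corresponding state. I would show that it acts on $K_0$ by the scaling $e^t$, and hence matches $T^t$ under Step 1. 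Together this defines $h(I_\Theta,t)$ as the surface with foliation data $(\Theta,e^t)$.

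\textbf{Step 3: topology and image.} Continuity of $h$ in both directions would come from comparing the Jacobson topology with the topology on the space of paths in the Bratteli diagram. The genericity of $U$ reflects the fact that only foliations with non-degenerate (uniquely ergodic) expansions give distinct primitive ideals. For $g=n=1$ the relevant data is an ordinary continued fraction of a single real number, so every point of $T_{1,1}$ is covered and $U=T_{g,n}$. For other $(g,n)$ one has to exhibit non-generic foliations, for example foliations that are not uniquely ergodic, which lie outside the image.

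\textbf{Step 4: the quotient.} Since $\sigma_t$ is an automorphism, $\sigma_t(I_\Theta)$ is again an ideal. I would compute $K_0$ of the quotient $\mathbb{A}(S_{g,n})/\sigma_t(I_\Theta)$ as the dimension group $\mathbf{Z}^{6g-6+2n}$, ordered by the foliation vector $\lambda$. This identifies the quotient with $\mathbb{A}^{6g-6+2n}_\Theta$.

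The main obstacle is Step 2. One must show that the primitive ideals are parametrized exactly by projective measured foliations, and that the modular flow really coincides with the Teichm\"uller geodesic flow; both require the tropical degeneration of the cluster exchange relations. I would attempt this by first working out the case $(1,1)$ explicitly and then using the flip-invariance of the triangulation coordinates.
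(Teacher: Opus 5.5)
The paper gives no proof of this statement. It is imported from \cite{Nik0} in the preliminaries, so there is no in-text argument to compare your route with. Judged on its own, your plan has a genuine gap at Step~2, in both of its halves. \emph{First, the flow.} You propose to show that $\sigma_t$ acts on $K_0$ by the scaling $e^t$. This cannot work.
\begin{itemize}
\item Multiplication by $e^t$ is not an automorphism of $K_0(\mathbb{A}(S_{g,n}))\cong\mathcal{A}(\mathbf{x},S_{g,n})\subseteq\mathbf{Z}[x_1^{\pm1},\dots]$ for $t\neq0$. For most $t$ it does not even preserve integer coefficients.
\item As soon as $\sigma_t$ is point-norm continuous (as the modular flow of a KMS state is), homotopy invariance forces $K_0(\sigma_t)=\mathrm{id}$.
\item Closed ideals of an AF-algebra correspond bijectively to order ideals of $K_0$, and this correspondence is natural under automorphisms. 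So such a flow fixes every ideal: $\sigma_t(I_\Theta)=I_\Theta$, and $\mathbb{A}(S_{g,n})/\sigma_t(I_\Theta)$ does not depend on $t$.
\item Your own Step~1 points the same way. $T^t$ changes only the scale of $\lambda$, not its projective class $\Theta$, and $\Theta$ is all that $I_\Theta$ remembers.
\end{itemize}
Hence $h$ cannot be realized as a function of the ideal $\sigma_t(I_\Theta)$. The $\mathbf{R}$-coordinate has to be carried by extra data in the state space, for example the normalization of a trace on the quotient matched with the scale of $\lambda$. The comparison with the Teichm\"uller flow has to be made there, not on $K_0$ or on ideals. Two further problems with identifying $\sigma_t$ as ``the modular flow of the corresponding state'': the modular flow of a trace is trivial, and that flow would depend on $\Theta$, whereas the statement needs one flow on $\mathbb{A}(S_{g,n})$.

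\emph{Second, the primitive spectrum.} Your claim that primitive ideals of an AF-algebra correspond to infinite paths of the Bratteli diagram, ``equivalently the ergodic states'', is false. A UHF algebra or an Effros--Shen algebra has exactly one primitive ideal but uncountably many infinite paths. Simple AF-algebras can also carry many extremal traces. The correct tool is Bratteli's description:
\begin{itemize}
\item Closed ideals correspond to vertex sets that are closed under passing to successors and saturated (a vertex all of whose successors lie in the set belongs to it). Equivalently, they correspond to order ideals of $K_0$.
\item Such an ideal is primitive iff any two vertices outside the set have a common descendant outside it.
\end{itemize}
What you would actually have to prove is that, for the diagram built from flips, these complements are exactly the nested chains of cones cut out by a projective class $\Theta$. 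This is where a continued-fraction expansion legitimately enters. You would also have to treat the degenerate classes: terminating expansions, and rationally dependent or non-uniquely ergodic $\lambda$. For these the quotient is not the simple dimension group $\mathbf{Z}^{6g-6+2n}$ ordered by $\lambda$ that Step~4 asserts; with a rationally dependent $\lambda$ that ordering fails Riesz interpolation. That analysis is precisely what ``generic'' and ``$U=T_{g,n}$ iff $g=n=1$'' encode, and Step~3 only asserts it. A smaller point: an ideal triangulation of $S_{g,n}$ has $6g-6+3n$ arcs, so you need $n$ puncture constraints, or shear coordinates, to land in a space of dimension $6g-6+2n$.
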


\subsection{Noncommutative tori}
The $m$-dimensional noncommutative torus $\mathscr{A}_{\Theta}^m$ is a
universal $C^*$-algebra  generated by the unitary operators $u_1,\dots, u_m$
satisfying the commutation relations 
$\{u_ju_i=e^{2\pi i \theta_{ij}} u_iu_j ~|~ 1\le i,j\le m\}$
for a skew-symmetric matrix  $\Theta=(\theta_{ij})\in M_m(\mathbf{R})$
[Rieffel 1990] \cite{Rie1}. 
 It is known that 
  $K_0(\mathscr{A}_{\Theta}^m)\cong K_1(\mathscr{A}_{\Theta}^m)\cong \mathbf{Z}^{2^{m-1}}$.
The canonical trace $\tau$ on the $C^*$-algebra
$\mathscr{A}_{\Theta}^m$ defines a homomorphism from 
$K_0(\mathscr{A}_{\Theta}^m)$ to the real line $\mathbf{R}$;
under the homomorphism, the image of $K_0(\mathscr{A}_{\Theta}^m)$
is a $\mathbf{Z}$-module, whose generators $\tau=(\tau_i)$ are polynomials 
in $\theta_{ij}$.

\subsubsection{Relation to the $\operatorname{AF}$-algebra  $\mathbb{A}_{\Theta}^{6g-6+2n}$}
The  $\mathscr{A}_{\Theta}^{6g-6+2n}$
is a crossed product $C^*$-algebra embedded into an $AF$-algebra 
$\mathbb{A}_{\Theta}^{6g-6+2n}$,  such that 
$K_0(\mathscr{A}_{\Theta}^{6g-6+2n})\cong K_0( \mathbb{A}_{\Theta}^{6g-6+2n})$.
Therefore  matrix $\Theta$ takes  the form:
\begin{equation}\label{eq2.1}
\Theta=
\left(
\begin{matrix}
0 & \alpha_1 &  &\cr
-\alpha_1 & 0 & \alpha_2 & \cr
  & -\alpha_2 & 0  &  &\cr
  \vdots & & & \vdots\cr
             & & & 0 & \alpha_{6g-7+2n} \cr
             & &  & - \alpha_{6g-7+2n} &0
\end{matrix}
\right). 
\end{equation}
The noncommutative torus $\mathscr{A}_{\Theta}^{6g-6+2n}$  is said to have real multiplication,   
if all $\alpha_k$ in  (\ref{eq2.1})  are algebraic numbers; we write $\mathscr{A}_{RM}^{6g-6+2n}$ in this case. 
Likewise,  one can think of $\alpha_k$ as components of 
 a normalized eigenvector $(1,\alpha_1,\dots,\alpha_{6g-7+2n})$ corresponding to the 
 Perron-Frobenius eigenvalue $\varepsilon>1$ of a positive matrix 
 $B\in GL_{6g-6+2n}(\mathbf{Z})$.

\subsection{Cuntz-Krieger algebras}
 The Cuntz-Krieger algebra is a  $C^*$-algebra $\mathcal{O}_A$
generated by the  partial isometries $s_1,\dots, s_m$ which satisfy  the relations:
\begin{equation}
\left\{
\begin{array}{ccc}
s_1^*s_1 &=& a_{11} s_1s_1^*+a_{12} s_2s_2^*+\dots+a_{1m}s_ms_m^*\\ 
s_2^*s_2 &=& a_{21} s_1s_1^*+a_{22} s_2s_2^*+\dots+a_{2m}s_ms_m^*\\ 
                  &\dots&\\
s_m^*s_m &=& a_{m1} s_1s_1^*+a_{m2} s_2s_2^*+\dots+a_{mm}s_ms_m^*,             
\end{array}
\right.
\end{equation}
where $A=(a_{ij})$ is a square matrix with  $a_{ij}\in \{0, 1, 2, \dots \}$ 
[Cuntz \& Krieger 1980] \cite{CunKri1}.
If  $\mathbb{A}$ be a stationary  $AF$-algebra given by matrix $A$
and $\mathscr{K}$ is the $C^*$-algebra of compact operators, 
then
\begin{equation}
\mathcal{O}_A\otimes \mathscr{K}\cong \mathbb{A}\rtimes_{\sigma_A}\mathbf{Z}, 
\end{equation}
where the crossed product is taken by the shift automorphism $\sigma_A$
of the Bratteli diagram of $\mathbb{A}$  \cite[Exercise 10.11.9 (b)]{B}. 
The $AF$-algebra $\mathbb{A}$ is a subalgebra of the stabilized Cuntz-Krieger algebra  $\mathcal{O}_A\otimes\mathscr{K}$,
i.e. there exists an injective homomorphism (embedding)  $\mathbb{A}\to\mathcal{O}_A\otimes\mathscr{K}$
[Cuntz \& Krieger 1980] \cite{CunKri1}.

\section{Proof}
\subsection{Proof of Theorem \ref{thm1.2}}
For the sake of clarity, let us outline the main ideas. 
Roughly speaking, Theorem \ref{thm1.2} follows from an isomorphism of the $C^*$-algebras 
given by formula (\ref{eq1.1}).  Namely,  let $S_{g,n}(\mathbf{F}_1)$ be a curve over $\mathbf{F}_1$
and $S_{g,n}(\mathbf{F}_1)\to \mathcal{O}_A$ the covariant functor  to the Cuntz-Krieger algebra.  
 The isomorphism (\ref{eq1.1}) says that the stabilized  $\mathcal{O}_A$ is generated by the $\operatorname{AF}$-algebra 
 $\mathbb{A}_{RM}^{6g-6+2n}$
 with an  embedded copy of the noncommutative torus  $\mathscr{A}_{RM}^{6g-6+2n}$ (Section 2.3.1).  
 It is known that the Grothendieck semigroup $K_0^+(\mathscr{A}_{RM}^{6g-6+2n})\cong \mathbf{Z}+\mathbf{Z}\alpha_1+
 \dots+\mathbf{Z}\alpha_{6g-7+2n}$ and its exponent $\exp [K_0^+(\mathscr{A}_{RM}^{6g-6+2n})]\cong \mathbf{Z}+\mathbf{Z}e^{2\pi i\alpha_1+\log\log\varepsilon}+
 \dots+\mathbf{Z}e^{2\pi i\alpha_{6g-7+2n}+\log\log\varepsilon}$ generates a Galois extension of $\mathbf{Q}$ \cite[Theorem 3.3]{Nik1}. 
 On the other hand, $\mathbb{A}_{RM}^{6g-6+2n}\cong \mathbb{A}(S_{g,n})/I_{\alpha}$
 and $K_0(\mathbb{A}(S_{g,n}))\cong \mathcal{A}(\mathbf{x},  S_{g,n})\subseteq \mathbf{Z}[x_1^{\pm 1},\dots, x_{6g-6+2n}^{\pm 1}]$;
 hence    one gets an isomorphism between  $\exp ~[K_0^+(\mathscr{A}_{RM}^{6g-6+2n})]\otimes\mathbf{Q}$ and the function field in one
 variable 
  $\mathbf{F}_{1}[x_1^{\pm 1},\dots, x^{\pm 1}_{6g-6+2n}]\otimes_{\mathbf{F}_1}\mathbf{Z} ~/~[S_{g,n}]\otimes_{\mathbf{F}_1}\mathbf{Z}$.
 (Definition \ref{dfn1.1}).
We pass to a detailed argument by splitting the proof in a series of lemmas. 

\begin{lemma}\label{lm3.1}
The map $S_{g,n}(\mathbf{F}_1)\to \mathcal{O}_A$ \cite{Nik2} gives rise to a functor 
$S_{g,n}(\mathbf{F}_1)\to \mathscr{A}_{RM}^{6g-6+2n}$, such that:

\medskip
(i) $\mathscr{A}_{RM}^{6g-6+2n}\subset \mathbb{A}_{RM}^{6g-6+2n}$, where the 
$\mathbb{A}_{RM}^{6g-6+2n}$ is a stationary $\operatorname{AF}$-algebra
corresponding to  the matrix $A$;

\smallskip
(ii)   the number field   $\exp ~[K_0^+(\mathscr{A}_{RM}^{6g-6+2n})]:=\mathbf{Q}\left(e^{2\pi i\alpha_k+\log\log\varepsilon}\right)$,
where $1\le k\le 6g-7+2n$, $\varepsilon$ is the eigenvalue and $(1, \alpha_1,\dots,\alpha_{6g-7+2n})$ is the normalized eigenvector
of the matrix $A$, is an invariant of morphisms of $S_{g,n}(\mathbf{F}_1)$. 
\end{lemma}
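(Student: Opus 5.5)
The plan is to compose the functor $S_{g,n}(\mathbf{F}_1)\to\mathcal{O}_A$ of \cite{Nik2} with the structure coming from the isomorphism (\ref{eq1.1}). Given a curve $S_{g,n}(\mathbf{F}_1)$, its image $\mathcal{O}_A$ is a Cuntz-Krieger algebra with a nonnegative integer matrix $A$. We may take $A$ primitive after passing to a power, so that the Perron-Frobenius theorem applies. Stabilizing, $\mathcal{O}_A\otimes\mathscr{K}\cong\mathbb{A}\rtimes_{\sigma_A}\mathbf{Z}$, where $\mathbb{A}$ is the stationary $\operatorname{AF}$-algebra with partial multiplicity matrix $A$. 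This $\mathbb{A}$ embeds in $\mathcal{O}_A\otimes\mathscr{K}$ (Section 2.4), and it is determined by $\mathcal{O}_A$ only up to the choice of the shift. So the first step is to show that the assignment $\mathcal{O}_A\mapsto\mathbb{A}$ is well defined on isomorphism classes. For this I would use that $A$ is recovered, up to shift equivalence, from the $K$-theoretic data of $\mathcal{O}_A$ together with its canonical $\mathbf{Z}$-action (the dual action/gauge action). Stationary $\operatorname{AF}$-algebras given by shift-equivalent matrices have isomorphic dimension groups, so by the Morita-invariance statement in Section 2.1.2 the algebra $\mathbb{A}$ is well defined up to stable isomorphism. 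This is enough for the functor.

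Next, to obtain (i), I identify $\mathbb{A}$ with $\mathbb{A}_{RM}^{6g-6+2n}$. The Perron-Frobenius eigenvector $(1,\alpha_1,\dots,\alpha_{6g-7+2n})$ of $A$ has algebraic entries lying in $\mathbf{Q}(\varepsilon)$. The dimension group of $\mathbb{A}$ is then order-isomorphic to $\mathbf{Z}+\mathbf{Z}\alpha_1+\dots+\mathbf{Z}\alpha_{6g-7+2n}$ with the order induced from $\mathbf{R}$, which is the standard computation for stationary dimension groups via the eigenvector. By Section 2.3.1, the noncommutative torus with matrix $\Theta$ as in (\ref{eq2.1}), built from these $\alpha_k$, embeds in an $\operatorname{AF}$-algebra with the same $K_0$. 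Since the $\alpha_k$ are algebraic, this torus is $\mathscr{A}_{RM}^{6g-6+2n}$, and completeness of the dimension group invariant gives $\mathscr{A}_{RM}^{6g-6+2n}\subset\mathbb{A}_{RM}^{6g-6+2n}\cong\mathbb{A}$. Theorem \ref{thm2.2} identifies this quotient with $\mathbb{A}(S_{g,n})/I_\alpha$, which ties the dimension $6g-6+2n$ to the curve. The composite $S_{g,n}(\mathbf{F}_1)\to\mathcal{O}_A\to\mathscr{A}_{RM}^{6g-6+2n}$ is the required functor. Morphisms go to the morphisms induced on $K_0$ by the corresponding $*$-homomorphisms of Cuntz-Krieger algebras restricted to the embedded $\operatorname{AF}$-core.

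For (ii), a morphism of $S_{g,n}(\mathbf{F}_1)$ induces a stable isomorphism, or more generally a morphism, of the associated tori. This in turn induces an order isomorphism of $K_0^+$, that is, a $\operatorname{GL}$-change of basis of the module $\mathbf{Z}+\sum\mathbf{Z}\alpha_k$ which fixes the projective class of the eigenvector. The Perron value $\varepsilon$ is unchanged, since it is the growth rate of the shift and an invariant of shift equivalence. The $\mathbf{Q}$-span of $\{1,\alpha_k\}$ is also unchanged. Applying $\exp$ as in \cite[Theorem 3.3]{Nik1} therefore produces the same field $\mathbf{Q}(e^{2\pi i\alpha_k+\log\log\varepsilon})$. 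The main obstacle is this invariance. The exponential map is not additive-to-field compatible for arbitrary $\mathbf{Z}$-basis changes, so I would first reduce to showing that the $\mathbf{Z}$-module and $\varepsilon$ are invariants, and then quote \cite[Theorem 3.3 and Corollary 3.4]{Nik1}. Those results say the generated field depends only on the module (equivalently on the Galois group $G$ of the torsion of the associated Drinfeld module), which settles the invariance.
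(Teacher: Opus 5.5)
There is a genuine gap in part (ii). Your chain ``morphism $\Rightarrow$ order isomorphism of $K_0^+$ $\Rightarrow$ same module and same $\varepsilon$ $\Rightarrow$ same field'' fails at each arrow.

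First, a morphism induces only a positive homomorphism on $K_0$, not an order isomorphism, so at best you handle isomorphisms.

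Second, $\varepsilon$ is an invariant of shift equivalence, that is, of the pair $(\mathbb{A},\sigma_A)$, or of $\mathcal{O}_A$ together with its gauge action (a $\mathbf{T}$-action). Nothing forces the arrows of $S_{g,n}(\mathbf{F}_1)$ to go to equivariant maps. For example, $\mathcal{O}_A\cong\mathcal{O}_2\cong\mathcal{O}_B$ for $A=\left(\begin{smallmatrix}1&1\\1&1\end{smallmatrix}\right)$ and $B=\left(\begin{smallmatrix}1&1\\1&0\end{smallmatrix}\right)$, whose Perron values are $2$ and $\frac{1+\sqrt5}{2}$. Likewise, the stationary $\operatorname{AF}$-algebras of $A$ and $A^p$ coincide. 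Your opening well-definedness step silently assumes the same equivariance.

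Third, even a genuine isomorphism need not preserve the $\alpha_k$. Conjugating a positive $A\in\operatorname{GL}_2(\mathbf{Z})$ by the coordinate swap keeps $\varepsilon$, $\mathcal{O}_A$ and the $\operatorname{AF}$-algebra, but replaces $\alpha$ by $1/\alpha$. Even for an integral affine change $\alpha_k'=\sum_j c_{kj}\alpha_j+d_k$ one gets $e^{2\pi i\alpha_k'}\log\varepsilon=\prod_j(e^{2\pi i\alpha_j}\log\varepsilon)^{c_{kj}}(\log\varepsilon)^{1-\sum_j c_{kj}}$, so the new generators need not lie in the old field.

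You see this obstacle, but you then hand it to \cite{Nik1} in a form that paper does not supply. What \cite[Theorem 3.3]{Nik1} gives, as used here, is invariance of the field generated by $F(\Lambda_\rho[a])$ under morphisms of Drinfeld modules. It does not say the field is a function of the $\mathbf{Z}$-module, and a Galois group $G$ does not determine a field.

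The missing idea is the paper's route. Identify $\mathbf{Q}(e^{2\pi i\alpha_k+\log\log\varepsilon})$ with the field generated by $F(\Lambda_\rho[a])$, where $F:\operatorname{Drin}_A^{3g-3+n}(k)\to\mathscr{A}_{RM}^{6g-6+2n}$. Then import its invariance from \cite{Nik1}, using only that the functor built in (i) preserves arrows. No $K$-theoretic invariance of the $\alpha_k$ or of $\varepsilon$ is needed.

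In (i), your skeleton matches the paper: stabilize, use (\ref{eq1.1}), restrict to the $\operatorname{AF}$ core, and use the embedding of Section 2.3.1. The additions, however, are not correct as stated.

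Passing to a power does not make an irreducible imprimitive matrix, let alone a reducible one, primitive. Moreover $\mathcal{O}_{A^p}\not\cong\mathcal{O}_A$ in general.

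The ``standard computation'' $K_0(\mathbb{A})\cong\mathbf{Z}+\mathbf{Z}\alpha_1+\dots+\mathbf{Z}\alpha_{N-1}$ needs two hypotheses. It needs $A\in\operatorname{GL}_N(\mathbf{Z})$; otherwise the inductive limit is a localization, e.g.\ $\mathbf{Z}[\frac{1}{2}]$ for the CAR algebra with $A=(2)$. It also needs an irreducible characteristic polynomial; otherwise $1,\alpha_1,\dots$ are $\mathbf{Q}$-dependent, the state has a kernel, and the group is not totally ordered.

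Theorem \ref{thm2.2} also does not show that an arbitrary Cuntz--Krieger matrix has size $6g-6+2n$, or that its $\operatorname{AF}$-algebra is a quotient of $\mathbb{A}(S_{g,n})$. The paper sidesteps all of this by taking (\ref{eq1.1}) with $A\in\operatorname{GL}_{6g-6+2n}(\mathbf{Z})$ as given. It reads both $\mathbb{A}_{RM}^{6g-6+2n}$ and $\sigma_A$ directly off the single matrix $A$ produced by \cite{Nik2}, and you should do the same.
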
 
\begin{proof}
(i) The functor $S_{g,n}(\mathbf{F}_1)\to \mathscr{A}_{RM}^{6g-6+2n}$ is defined as follows. 
 Let $S_{g,n}(\mathbf{F}_1)\to \mathcal{O}_A$ be the natural map constructed in \cite{Nik2}. 
Clearly, such a map extends to the stabilized Cuntz-Krieger algebras $\mathcal{O}_A\otimes\mathscr{K}$,
i.e. one gets a natural map  $S_{g,n}(\mathbf{F}_1)\to \mathcal{O}_A\otimes\mathscr{K}$. 
On the other hand, the isomorphism (\ref{eq1.1}) implies a natural map:
\begin{equation}\label{eq3.1}
S_{g,n}(\mathbf{F}_1)\to \mathbb{A}_{RM}^{6g-6+2n}\rtimes_{\sigma_A}\mathbf{Z},
\end{equation}
where $\sigma_A$ is the shift automorphism of the stationary $\operatorname{AF}$-algebra 
 $\mathbb{A}_{RM}^{6g-6+2n}$ given by the matrix $A$  [Blackadar 1986] \cite[Exercise 10.11.9 (b)]{B}.
Since both the $\mathbb{A}_{RM}^{6g-6+2n}$ and $\sigma_A$ depend on the single matrix $A$,
we can restrict the natural map (\ref{eq3.1}) to the $\operatorname{AF}$-algebra $\mathbb{A}_{RM}^{6g-6+2n}$.
In other words, one gets a functor  $S_{g,n}(\mathbf{F}_1)\to \mathbb{A}_{RM}^{6g-6+2n}$. 
The noncommutative torus  $\mathscr{A}_{RM}^{6g-6+2n}$ is densely embedded into the 
$\operatorname{AF}$-algebra  $\mathbb{A}_{RM}^{6g-6+2n}$ (Section 2.3.1) and thus one gets 
a functor $S_{g,n}(\mathbf{F}_1)\to \mathscr{A}_{RM}^{6g-6+2n}$.  
Item (i) of Lemma \ref{lm3.1} is proved.

\medskip
(ii) Recall that the number field $\mathbf{Q}\left(e^{2\pi i\alpha_k+\log\log\varepsilon} ~|~1\le k\le 6g-7+2n\right)$
is generated by $F(\Lambda_{\rho}[a])$, where $F: \operatorname{Drin}_A^{3g-3+n}(k)\to \mathscr{A}_{RM}^{6g-6+2n}$
is the functor from the category of Drinfeld modules to such of the noncommutative tori
and  $\Lambda_{\rho}[a]$ is the torsion submodule \cite[Theorem 3.3]{Nik1}.
Such a number field is an invariant of the morphisms of the modules  $\operatorname{Drin}_A^{3g-3+n}(k)$, 
{\it ibid}. On the other hand, the natural map $S_{g,n}(\mathbf{F}_1)\to \mathscr{A}_{RM}^{6g-6+2n}$
preserves the arrows (morphisms) of the respective categories. We conclude therefore that the 
number field  $\exp ~[K_0^+(\mathscr{A}_{RM}^{6g-6+2n})]$
is an invariant  of $S_{g,n}(\mathbf{F}_1)$. 

\medskip
Lemma \ref{lm3.1} is proved. 
\end{proof}

\begin{lemma}\label{lm3.2}
The function field 
 $\mathbf{F}_{1}[x_1^{\pm 1},\dots, x^{\pm 1}_{6g-6+2n}]\otimes_{\mathbf{F}_1}\mathbf{Z} ~/~[S_{g,n}]\otimes_{\mathbf{F}_1}\mathbf{Z}$
is isomorphic to  $\exp ~[K_0^+(\mathscr{A}_{RM}^{6g-6+2n})]\otimes\mathbf{Q}$
($\mathfrak{Re}~\exp ~[K_0^+(\mathscr{A}_{RM}^{6g-6+2n})]\otimes\mathbf{Q}$, resp.)
if $[S_{g,n}]$ is apalindromic (palindromic, resp.). 
\end{lemma}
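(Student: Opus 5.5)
The plan is to compute the quotient $\mathcal{A}(\mathbf{x},S_{g,n})/[S_{g,n}]\otimes_{\mathbf{F}_1}\mathbf{Z}$ through $K$-theory, and then transport it to the exponentiated dimension group of the noncommutative torus.

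First I would use Definition \ref{dfn1.1}: there $\mathbf{F}_{1}[x_1^{\pm 1},\dots, x^{\pm 1}_{6g-6+2n}]\otimes_{\mathbf{F}_1}\mathbf{Z}\cong\mathcal{A}(\mathbf{x},S_{g,n})$. Since $K_0(\mathbb{A}(S_{g,n}))\cong\mathcal{A}(\mathbf{x},S_{g,n})$, this ring is identified with the dimension group of the cluster $C^*$-algebra. The ideal $[S_{g,n}]\otimes_{\mathbf{F}_1}\mathbf{Z}$ is, by definition, the image of $K_0(I_\alpha)\subset\mathcal{A}(\mathbf{x},S_{g,n})$.

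Next I would apply the six-term exact sequence to the extension $0\to I_\alpha\to\mathbb{A}(S_{g,n})\to\mathbb{A}(S_{g,n})/I_\alpha\to 0$. All three algebras are AF, so $K_1$ vanishes and the sequence collapses to a short exact sequence of dimension groups:
$$0\to K_0(I_\alpha)\to K_0(\mathbb{A}(S_{g,n}))\to K_0(\mathbb{A}_{RM}^{6g-6+2n})\to 0.$$
By Theorem \ref{thm2.2}, with $\Theta=\alpha$ chosen as in Lemma \ref{lm3.1}, the quotient is $\mathbb{A}_{RM}^{6g-6+2n}$. Hence
$$\mathcal{A}(\mathbf{x},S_{g,n})/[S_{g,n}]\otimes_{\mathbf{F}_1}\mathbf{Z}\cong K_0(\mathbb{A}_{RM}^{6g-6+2n})\cong K_0(\mathscr{A}_{RM}^{6g-6+2n}),$$
using Section 2.3.1. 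The positive cone is $\mathbf{Z}+\mathbf{Z}\alpha_1+\dots+\mathbf{Z}\alpha_{6g-7+2n}$.

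Concretely, the map sends the class of a Laurent polynomial $p$ to its value under the specialization of the cluster variables coming from the Perron–Frobenius eigenvector of $A$. I would then compose with the exponential map $\alpha_k\mapsto e^{2\pi i\alpha_k+\log\log\varepsilon}$ of Lemma \ref{lm3.1}(ii). After tensoring with $\mathbf{Q}$ this identifies the function field with $\exp[K_0^+(\mathscr{A}_{RM}^{6g-6+2n})]\otimes\mathbf{Q}$. The field structure, beyond the additive one, is to be inherited from \cite[Theorem 3.3]{Nik1}, where this group is shown to generate a Galois number field. Invariance under morphisms of $S_{g,n}(\mathbf{F}_1)$ is Lemma \ref{lm3.1}(ii).

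It remains to split into the two cases. Inversion $x_k\mapsto x_k^{-1}$ corresponds, under the identification above, to $\alpha_k\mapsto-\alpha_k$, i.e.\ to complex conjugation of $e^{2\pi i\alpha_k+\log\log\varepsilon}$.
\begin{itemize}
\item If $[S_{g,n}]$ is palindromic, the ideal, and hence the quotient, is invariant under this involution. The field is then the fixed field of conjugation, generated by $\frac12(z+\bar z)$, i.e.\ by $\cos 2\pi\alpha_k\times\log\varepsilon$. This gives $\mathfrak{Re}\,\exp[K_0^+]\otimes\mathbf{Q}$.
\item If $[S_{g,n}]$ is apalindromic, no such symmetry is imposed and the full field $\exp[K_0^+]\otimes\mathbf{Q}$ is obtained.
\end{itemize}

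The main obstacle is the multiplicative structure. The $K$-theoretic isomorphism is only one of ordered groups, whereas the statement asks for an isomorphism of fields. To handle this I would argue that $[S_{g,n}]\otimes_{\mathbf{F}_1}\mathbf{Z}$ is a maximal ideal, as in Deitmar's definition, so the quotient ring is already a field. I would then match its generators (the images of the $x_k$) with the $e^{2\pi i\alpha_k+\log\log\varepsilon}$, and check that the relations $f_j=0$ defining $S_{g,n}$ go to the algebraic relations among these numbers.
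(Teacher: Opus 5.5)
\textbf{There is a genuine gap in the proposal.} Your outline follows the paper's proof. Your short exact sequence of $K_0$-groups is what the middle row of Figure~1 encodes. The number field is imported from \cite[Theorem 3.3]{Nik1}, with invariance from Lemma~\ref{lm3.1}(ii). The two cases are separated as in steps (iii)--(iv) of the paper, by letting $\theta\colon x_j^{\pm 1}\mapsto x_j^{\mp 1}$ pull back to complex conjugation.

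The gap is the step you add to get the multiplicative structure, and it fails. Maximality in Deitmar's sense does not survive base change. For an ideal $\mathfrak{a}$ of a monoid $M$, the ideal $\mathfrak{a}\otimes_{\mathbf{F}_1}\mathbf{Z}$ is the $\mathbf{Z}$-span of the elements of $\mathfrak{a}$. So $(M\otimes_{\mathbf{F}_1}\mathbf{Z})/(\mathfrak{a}\otimes_{\mathbf{F}_1}\mathbf{Z})$ is a free $\mathbf{Z}$-module on $M\setminus\mathfrak{a}$ with $1$ as a basis vector, and $2$ is not invertible in it. Already for $\mathbf{F}_1[T]$ and its maximal ideal $(T)$ one gets $\mathbf{Z}[T]/(T)\cong\mathbf{Z}$.

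Your own exact sequence shows the same thing. Additively the quotient is $K_0(\mathbb{A}_{RM}^{6g-6+2n})\cong\mathbf{Z}+\mathbf{Z}\alpha_1+\dots+\mathbf{Z}\alpha_{6g-7+2n}$, a finitely generated free abelian group, which is not the additive group of any field. The ``$\otimes\mathbf{Q}$'' in your identification is applied to one side only. Matching generators cannot repair this. Evaluation at the Perron--Frobenius data is a ring map into $\mathbf{Q}(\alpha_1,\dots,\alpha_{6g-7+2n})\subset\mathbf{R}$. By contrast, $\alpha_k\mapsto e^{2\pi i\alpha_k+\log\log\varepsilon}$ is only a relabelling of $\mathbf{Z}$-module generators, so nothing transports products, or the relations $f_j=0$, to the target.

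The case split contains a separate non sequitur. Grant that $\theta$ acts on $\mathcal{A}(\mathbf{x},S_{g,n})$. A $\theta$-stable ideal (and a palindromic one in the sense of Definition~\ref{dfn1.1} is such) only lets $\theta$ induce an automorphism of the quotient. It does not make the quotient the fixed field of conjugation; that would require $p-\theta(p)\in[S_{g,n}]\otimes_{\mathbf{F}_1}\mathbf{Z}$ for every $p$. The paper makes the same move, resting it on the assertion that $\mathfrak{Re}\,\mathbb{A}_{RM}^{6g-6+2n}$ is preserved by $\theta$ \cite[Section 4.2]{Nik1}.

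Note also that the paper never tries to extract a field structure from the quotient ring. In step (ii) it takes the Galois field $\mathbf{K}$ attached to $S_{g,n}(\mathbf{F}_1)$ by Lemma~\ref{lm3.1}(ii). It then identifies $\mathbf{K}$, via \cite[Corollary 3.4]{Nik1}, with $\exp[K_0^+(\mathscr{A}_{RM}^{6g-6+2n})]\otimes\mathbf{Q}$ or with its real part, according as $\mathbf{K}$ is totally imaginary or totally real. Within the paper's framework that citation is what replaces your maximal-ideal step. However, it supplies the field by appeal rather than by a ring isomorphism out of $\mathcal{A}(\mathbf{x},S_{g,n})/[S_{g,n}]\otimes_{\mathbf{F}_1}\mathbf{Z}$. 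So the obstacle you correctly identified is not removed by either argument.
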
 
\begin{proof}
(i) The enveloping $\operatorname{AF}$-algebra 
 $\mathbb{A}_{RM}^{6g-6+2n}\cong \mathbb{A}(S_{g,n})/I_{\alpha}$
 gives rise to    a commutative  diagram in Figure 1.
The vertical arrows $K_0$ and $\otimes_{\mathbf{F}_1}\mathbf{Z}$ on the diagram  are the 
$K_0$-functor [Blackadar 1986] \cite{B}  and  the extension of base functor $\otimes_{\mathbf{F}_1}\mathbf{Z}$ [Deitmar 2005] \cite{Dei1}, 
respectively.

\begin{figure}[h]
\begin{picture}(300,150)(0,0)


\put(20,130){\vector(0,-1){35}}
\put(130,130){\vector(0,-1){35}}
\put(45,143){\vector(1,0){35}}
\put(43,143){$\subset$}

\put(25,115){$\scriptstyle K_0$}
\put(135,115){$\scriptstyle K_0$}

\put(25,55){$\scriptstyle \otimes_{\mathbf{F}_1}\mathbf{Z}$}
\put(135,55){$\scriptstyle \otimes_{\mathbf{F}_1}\mathbf{Z}$}

\put(17,140){$\scriptstyle I_{\alpha}$}
\put(120,140){$\scriptstyle \mathbb{A}(S_{g,n})$}


\put(20,70){\vector(0,-1){35}}
\put(130,70){\vector(0,-1){35}}
\put(45,23){\vector(1,0){35}}
\put(43,23){$\subset$}
\put(45,83){\vector(1,0){35}}
\put(43,83){$\subset$}

\put(10,20){$ \scriptstyle [S_{g,n}]$}

\put(10,80){$\scriptstyle K_0(I_{\alpha})$}

\put(92,80){$\scriptstyle\mathbf{Z}[x_1^{\pm 1},\dots,x_{6g-6+2n}^{\pm 1}]$}



\put(240,130){\vector(0,-1){35}}
\put(175,143){\vector(1,0){20}}

\put(232,140){$\scriptstyle \mathbb{A}_{RM}^{6g-6+2n}$}


\put(240,70){\vector(0,-1){35}}
\put(175,23){\vector(1,0){20}}
\put(175,83){\vector(1,0){20}}

\put(92,20){$\scriptstyle\mathbf{F}_1[x_1^{\pm 1},\dots,x_{6g-6+2n}^{\pm 1}]$}
\put(202,20){$\scriptstyle\mathbf{F}_1[x_1^{\pm 1},\dots,x_{6g-6+2n}^{\pm 1}]/[S_{g,n}]$}

\put(202,80){$\scriptstyle\mathbf{Z}[x_1^{\pm 1},\dots,x_{6g-6+2n}^{\pm 1}]/K_0(I_{\alpha})$}

\end{picture}
\caption{Reduction diagram}
\end{figure}

\medskip
(ii) If $\mathbf{K}$ be a Galois extension of the field $\mathbf{Q}$,
then $\mathbf{K}\cong\exp ~[K_0^+(\mathscr{A}_{RM}^{6g-6+2n})]\otimes\mathbf{Q}$   ($\mathbf{K}\cong \mathfrak{Re}~\exp ~[K_0^+(\mathscr{A}_{RM}^{6g-6+2n})]\otimes\mathbf{Q}$, resp.),
if  $\mathbf{K}$ is totally imaginary (totally real, resp.)  \cite[Corollary 3.4]{Nik1}.  
The number field $\mathbf{K}$ is invariant  of $S_{g,n}(\mathbf{F}_1)$, see Lemma \ref{lm3.1} (ii).

\medskip
(iii)  Let $[S_{g,n}]$ be palindromic,  i.e.   $p(x_1^{\mp 1},\dots, x_{6g-6+2n}^{\mp 1})=p(x_1^{\pm 1},\dots, x_{6g-6+2n}^{\pm 1})$; see  Definition \ref{dfn1.1}. 
Such a property means that the Laurent polynomilals belonging to  the ideal $[S_{g,n}]\otimes_{\mathbf{F}_1}\mathbf{Z}$ are invariant of the involution 
$\theta:  x_j^{\pm 1}\mapsto x_j^{\mp 1}$.  The pullback of $\theta$ 
is   complex conjugation of the algebra
 $\mathbb{A}_{RM}^{6g-6+2n}\cong  \mathfrak{Re} ~\mathbb{A}_{RM}^{6g-6+2n}
+i  ~\mathfrak{Im}  ~\mathbb{A}_{RM}^{6g-6+2n}.$ 
In other words,  the real $C^*$-algebra  $\mathfrak{Re} ~\mathbb{A}_{RM}^{6g-6+2n}$ is preserved by the involution $\theta$ \cite[Section 4.2]{Nik1}. 
Since $K_0^+(\mathbb{A}_{RM}^{6g-6+2n})\cong K_0^+(\mathscr{A}_{RM}^{6g-6+2n})$,
we conclude that the number field 
$\mathfrak{Re}~\exp ~[K_0^+(\mathscr{A}_{RM}^{6g-6+2n})]\otimes\mathbf{Q}$
is isomorphic to the function field 
$\mathbf{F}_{1}[x_1^{\pm 1},\dots, x^{\pm 1}_{6g-6+2n}]\otimes_{\mathbf{F}_1}\mathbf{Z} ~/~[S_{g,n}]\otimes_{\mathbf{F}_1}\mathbf{Z}$.

\medskip
(iv)  Let $[S_{g,n}]$ be apalindromic,  i.e.   $p(x_1^{\mp 1},\dots, x_{6g-6+2n}^{\mp 1})\ne p(x_1^{\pm 1},\dots, x_{6g-6+2n}^{\pm 1})$. 
Likewise, the involution $\theta:  x_j^{\pm 1}\mapsto x_j^{\mp 1}$ pulls back to complex conjugation, but in this case 
there are no invarints of conjugation except for  $\mathbb{A}_{RM}^{6g-6+2n}$.
 We conclude therefore that  
$\exp ~[K_0^+(\mathscr{A}_{RM}^{6g-6+2n})]\otimes\mathbf{Q}$
is isomorphic to the function field 
\linebreak
$\mathbf{F}_{1}[x_1^{\pm 1},\dots, x^{\pm 1}_{6g-6+2n}]\otimes_{\mathbf{F}_1}\mathbf{Z} ~/~[S_{g,n}]\otimes_{\mathbf{F}_1}\mathbf{Z}$.

\bigskip
Lemma \ref{lm3.2} is proved. 
\end{proof}

\begin{lemma}\label{lm3.3}
An extension of degree $m\ge 1$ of $\mathbf{F}_1$ corresponds to adjoining the $m$-th root 
of unity (the real part of  $m$-th root 
of unity, resp.) to the field 
\linebreak
$\exp ~[K_0^+(\mathscr{A}_{RM}^{6g-6+2n})]\otimes\mathbf{Q}$
($\mathfrak{Re}~\exp ~[K_0^+(\mathscr{A}_{RM}^{6g-6+2n})]\otimes\mathbf{Q}$, resp.), 
i.e. 
\begin{equation} 
\begin{array}{lcr}
  \mathbf{F}_{1^m}[x_1^{\pm 1},\dots, x^{\pm 1}_{6g-6+2n}] \otimes_{\mathbf{F}_1}\mathbf{Z}~/~[S_{g,n}]\otimes_{\mathbf{F}_1}\mathbf{Z}
\cong&&\\
&&\\
\cong
\begin{cases} \mathbf{Q}\left(e^{2\pi i\alpha_k +\log\log\varepsilon} ~|~1\le k\le 6g-7+2n \right)
(e^{\frac{2\pi i}{m}}), & if ~[S_{g,n}] ~is\cr
& apalindromic,\cr
               \mathbf{Q}\left(\cos 2\pi(\alpha_k +\frac{1}{m})\times\log\varepsilon ~|~1\le k\le 6g-7+2n\right), 
               &  if ~[S_{g,n}] ~is\cr
& palindromic.
\nonumber
\end{cases}               
&&
\end{array}
\end{equation}
\end{lemma}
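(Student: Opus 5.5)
The plan is to reduce Lemma \ref{lm3.3} to Lemma \ref{lm3.2} by isolating exactly what changes when $\mathbf{F}_1$ is replaced by $\mathbf{F}_{1^m}$. We use the standard model of the extension of degree $m$ [Kapranov \& Smirnov 1995] \cite[Section 1.2]{KapSmi1}, [Soul\'e 2004] \cite{Sou1}. In that model $\mathbf{F}_{1^m}$ is the monoid $\{0\}\cup\mu_m$, where $\mu_m$ is the cyclic group of order $m$. Its base extension is
\[
\mathbf{F}_{1^m}\otimes_{\mathbf{F}_1}\mathbf{Z}\cong \mathbf{Z}[t]/(t^m-1).
\]
Consequently
\[
\mathbf{F}_{1^m}[x_1^{\pm 1},\dots,x^{\pm 1}_{6g-6+2n}]\otimes_{\mathbf{F}_1}\mathbf{Z}\cong \mathbf{F}_{1}[x_1^{\pm 1},\dots,x^{\pm 1}_{6g-6+2n}]\otimes_{\mathbf{F}_1}\mathbf{Z}\otimes_{\mathbf{Z}}\mathbf{Z}[t]/(t^m-1),
\]
because $\otimes_{\mathbf{F}_1}\mathbf{Z}$ is a functor that takes products of monoids to tensor products of rings [Deitmar 2005] \cite{Dei1}.

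\emph{Step 1: pass to the quotient.} Divide by the ideal $[S_{g,n}]\otimes_{\mathbf{F}_1}\mathbf{Z}$ and apply Lemma \ref{lm3.2}. In the apalindromic case this gives $\mathbf{K}\otimes_{\mathbf{Z}}\mathbf{Z}[t]/(t^m-1)$ with $\mathbf{K}=\exp[K_0^+(\mathscr{A}_{RM}^{6g-6+2n})]\otimes\mathbf{Q}$. In the palindromic case the same holds with $\mathfrak{Re}\,\mathbf{K}$ in place of $\mathbf{K}$.

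\emph{Step 2: extract a field.} The algebra $\mathbf{K}[t]/(t^m-1)$ is not a field. We pass to the factor corresponding to a primitive root, that is, to the cyclotomic factor $\Phi_m(t)$. This factor is what the degree-$m$ extension singles out: the image of a generator of $\mu_m$ must have exact order $m$. Its residue field is $\mathbf{K}(e^{2\pi i/m})$, which is the apalindromic formula.

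\emph{Step 3: the palindromic case.} By Lemma \ref{lm3.2}(iii), the involution $\theta: x_j^{\pm1}\mapsto x_j^{\mp1}$ pulls back to complex conjugation. The same involution acts on $\mu_m$ by $t\mapsto t^{-1}$, so we must take conjugation-invariants of the generators. The generators of the field in Step 2 are the products
\[
e^{2\pi i\alpha_k+\log\log\varepsilon}\cdot e^{2\pi i/m}=e^{2\pi i(\alpha_k+\frac1m)}\log\varepsilon .
\]
Their real parts are $\cos 2\pi(\alpha_k+\frac1m)\times\log\varepsilon$. We use the twisted generators rather than adjoining $\cos(2\pi/m)$ separately because the ideal $[S_{g,n}]$ is $\mu_m$-equivariant after base change. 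I would check this equivariance by applying the diagram of Figure 1 with $\mathbf{Z}$ replaced by $\mathbf{Z}[\mu_m]$. We then identify the resulting field with $\mathbf{Q}(\cos 2\pi(\alpha_k+\frac1m)\times\log\varepsilon)$ using \cite[Corollary 3.4]{Nik1}, as in Lemma \ref{lm3.2}(ii).

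\emph{Main obstacle.} The hardest point is Step 2: justifying that the quotient is the primitive cyclotomic factor and not the whole group algebra or a different factor $\Phi_d$ with $d\mid m$. I would argue it first via maximality of the ideal $[S_{g,n}]$ in the sense of Deitmar [Deitmar 2005] \cite[Section 1.2]{Dei1}, which forces the quotient to be a field. I would then pick the factor on which $\mu_m$ acts faithfully, since that is the condition that the extension has degree exactly $m$.
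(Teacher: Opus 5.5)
There is a genuine gap. It is located where you expected it, and the proposed repair does not close it.

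\textbf{Step 2: the quotient is not a field.} Your Step 1 computation, taken at face value, shows that the left-hand side is
\[ \mathbf{K}\otimes_{\mathbf{Z}}\mathbf{Z}[t]/(t^m-1)\cong\prod_{d\mid m}\mathbf{K}\otimes_{\mathbf{Q}}\mathbf{Q}(e^{2\pi i/d}). \]
This ring has zero divisors for every $m>1$, e.g.\ $(t-1)(1+t+\cdots+t^{m-1})=0$, so it cannot be isomorphic to any field. Maximality in Deitmar's sense cannot repair this, because it is not preserved by $\otimes_{\mathbf{F}_1}\mathbf{Z}$. Already for the monoid $\mathbf{F}_{1^m}$ itself, the unique maximal ideal $\mathbf{F}_{1^m}\setminus\mu_m$ base-changes to the zero ideal, and the quotient is $\mathbf{Z}[t]/(t^m-1)$. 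Even for $m=1$ the quotient is $\mathbf{Z}$, not a field. So passing to the $\Phi_m$-component ``on which $\mu_m$ acts faithfully'' is a projection you impose; the definitions do not force it. In the apalindromic case you have only shown that one direct factor of the left-hand side is $\mathbf{K}(e^{2\pi i/m})$.

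\textbf{Step 3: the palindromic case has a second, independent gap.} Read consistently, your Steps 1--2 produce either $(\mathfrak{Re}\,\mathbf{K})(e^{2\pi i/m})$ (as Step 1 states) or $\mathbf{K}(e^{2\pi i/m})$ (as Step 3 uses). In either reading, the conjugation-fixed subfield contains $\cos\frac{2\pi}{m}$ and $\cos 2\pi\alpha_k\times\log\varepsilon$ separately. There is no reason for it to coincide with $\mathbf{Q}(\cos 2\pi(\alpha_k+\frac1m)\times\log\varepsilon)$.

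Tensoring with $\mathbf{Z}[\mu_m]$ treats the root of unity independently of the $\alpha_k$. Nothing in your construction couples them into the twisted generators $e^{2\pi i(\alpha_k+\frac1m)+\log\log\varepsilon}$. The ``$\mu_m$-equivariance'' you defer is exactly the missing statement. Likewise, \cite[Corollary 3.4]{Nik1} applies to the parameters $\alpha_k+\frac1m$ only once you know they come from a torus in the image of $F$.

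\textbf{What the paper uses instead.} The paper supplies this coupling through the Drinfeld side:
\begin{itemize}
\item A degree-$m$ isogeny $f_m$ of Drinfeld modules is sent by $F$ to a morphism of noncommutative tori with $\widetilde{\alpha}_k=\alpha_k+\frac1m$ \cite[Theorem 3.3]{Nik1}.
\item Via Figure 1, the extension $\mathbf{F}_1\subseteq\mathbf{F}_{1^m}$ is matched with an index-$m$ sub-semigroup of $K_0^+$, identified with $K_0^+(\widetilde{\mathscr{A}}_{RM}^{6g-6+2n})$.
\item Hence the $\mathbf{F}_{1^m}$-function field is $\exp[K_0^+(\widetilde{\mathscr{A}}_{RM}^{6g-6+2n})]\otimes\mathbf{Q}$ (resp.\ its real part). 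Its generators are already the twisted ones in (\ref{eq3.2}).
\end{itemize}
Both formulas then follow from (\ref{eq3.2}). Use the identity $e^{2\pi i(\alpha_k+\frac1m)+\log\log\varepsilon}=e^{2\pi i\alpha_k+\log\log\varepsilon}\,e^{2\pi i/m}$ for the apalindromic case, and take real parts for the palindromic case.

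Without the isogeny ingredient, your route reaches at most $\mathbf{K}(e^{2\pi i/m})$ in the apalindromic case, and only after the unjustified choice of component. It does not reach the palindromic formula at all.
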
 
\begin{proof}
(i) Let $f_m: \widetilde{\operatorname{Drin}}_A^{r}(k)\to \operatorname{Drin}_A^{r}(k)$
be an isogeny of degree $m\ge 1$ between  the Drinfeld modules $\widetilde{\operatorname{Drin}}_A^{r}(k)$ and $\operatorname{Drin}_A^{r}(k)$ [Rosen 2002] \cite[Section 12]{R}.  
The functor $F: \operatorname{Drin}_A^{3g-3+n}(k)\to \mathscr{A}_{RM}^{6g-6+2n}$ maps $f_m$ to a $C^*$-homomorphism $F(f_m): \widetilde{\mathscr{A}}_{RM}^{6g-6+2n}
\to \mathscr{A}_{RM}^{6g-6+2n}$  of the corresponding noncommutative tori $\widetilde{\mathscr{A}}_{RM}^{6g-6+2n}$ and $\mathscr{A}_{RM}^{6g-6+2n}$,
where $\{\widetilde{\alpha}_k=\alpha_k+\frac{1}{m} ~|~1\le k\le 6g-7+2n\}$  \cite[Theorem 3.3]{Nik1}.
In particular,  the image of  the torsion submodule $\widetilde{\Lambda}_{\rho}[a]$ is given by the formula:
\begin{equation}\label{eq3.2}
F(\widetilde{\Lambda}_{\rho}[a])=e^{2\pi i(\alpha_k+\frac{1}{m}) +\log\log\varepsilon},   \qquad 1\le k\le 6g-7+2n. 
\end{equation}

\medskip
(ii)  Let $[S_{g,n}]$ be apalindromic.  In view of Lemma \ref{lm3.2},  one gets 
an ismorphism of the fields 
 $\mathbf{F}_{1}[x_1^{\pm 1},\dots, x^{\pm 1}_{6g-6+2n}]\otimes_{\mathbf{F}_1}\mathbf{Z} ~/~[S_{g,n}]\otimes_{\mathbf{F}_1}\mathbf{Z}
 \cong \exp ~[K_0^+(\mathscr{A}_{RM}^{6g-6+2n})]\otimes\mathbf{Q}$. 
In view of the commutative diagram in Figure 1, the $m$-th order  extension  $\mathbf{F}_1\subseteq \mathbf{F}_{1^m}$ 
gives rise to a sugroup of  index $m$ of the Grothendieck semigroup $K_0^+(\mathscr{A}_{RM}^{6g-6+2n})$, i.e.
\begin{equation}\label{eq3.3}
\mathbf{F}_{1^m}[x_1^{\pm 1},\dots, x^{\pm 1}_{6g-6+2n}]\otimes_{\mathbf{F}_1}\mathbf{Z} ~/~[S_{g,n}]\otimes_{\mathbf{F}_1}\mathbf{Z}
 \cong \exp ~[K_0^+(\widetilde{\mathscr{A}}_{RM}^{6g-6+2n})]\otimes\mathbf{Q},
\end{equation}
where $\widetilde{\mathscr{A}}_{RM}^{6g-6+2n}$ comes from the isogeny $f_m$ between the Drinfeld modules  
$\widetilde{\operatorname{Drin}}_A^{r}(k)$ and $\operatorname{Drin}_A^{r}(k)$. 
It remains to combine formulas (\ref{eq3.2}) and (\ref{eq3.3}) with the obvious 
identity $\exp  \left(2\pi i(\alpha_k+\frac{1}{m}) +\log\log\varepsilon\right)= \exp  \left(2\pi i\alpha_k +\log\log\varepsilon\right)\exp \left( \frac{2\pi i}{m}\right)$.
The conclusion of Lemma \ref{lm3.3} for the $[S_{g,n}]$  apalindromic is proved.   

\medskip
(iii)  Let $[S_{g,n}]$ be palindromic.  By  Lemma \ref{lm3.2},  one gets 
 $\mathbf{F}_{1}[x_1^{\pm 1},\dots, x^{\pm 1}_{6g-6+2n}]\otimes_{\mathbf{F}_1}\mathbf{Z} ~/~[S_{g,n}]\otimes_{\mathbf{F}_1}\mathbf{Z}
 \cong \mathfrak{Re} ~\exp ~[K_0^+(\mathscr{A}_{RM}^{6g-6+2n})]\otimes\mathbf{Q}$. 
We repeat the argument of item (ii) and notice that $\mathfrak{Re}  ~\exp  \left(2\pi i(\alpha_k+\frac{1}{m}) +\log\log\varepsilon\right)=\cos 2\pi(\alpha_k +\frac{1}{m})\times\log\varepsilon$. 
The conclusion of Lemma \ref{lm3.3} when the $[S_{g,n}]$  palindromic follows.

\bigskip
Lemma \ref{lm3.3} is proved. 
\end{proof}

\bigskip
Theorem \ref{thm1.2} follows from Lemma \ref{lm3.3}.

\subsection{Proof of Corollary \ref{cor1.5}}
(i) Recall that the Galois groups  
\linebreak
 $\operatorname{Gal} \left(\mathbf{Q}(e^{2\pi i\alpha_k+\log\log\varepsilon} ~|~1\le k\le 6g-7+2n)  ~| ~\mathbf{Q}\right)\subseteq \operatorname{GL}_{3g-3+n}\left(A/aA\right)$
 and
 \linebreak
$\operatorname{Gal} \left(\mathbf{Q}(cos ~2\pi\alpha_k \times\log\varepsilon ) ~|~1\le k\le 6g-7+2n)  ~| ~\mathbf{Q}\right)\subseteq \operatorname{GL}_{3g-3+n}\left(A/aA\right)$.
In particular, whenever the rank of the Drinfeld module $r=3g-3+n=1$, one gets the abelian Galois group,  since $A/aA$ is a commutative ring.  

\medskip
(ii) On the other hand, the topological type $(g,n)$ satisfying  condition  $3g-3+n=1$ with  $g\ge 0$ and $n\in\{1, 2, 3\}$ has the unique 
solution $g=n=1$.  Thus any number field of the topological type $(1,1)$, i.e.  once-punctured torus, is an abelian extension of the field $\mathbf{Q}$.

\medskip
(iii) It remains to compare Theorem \ref{thm1.2} for $g=n=1$ with the results  \cite[Corollary 3.4]{Nik1}.  

\bigskip
Corollary \ref{cor1.5} is proved.

\section{Number fields of topological type $(1,1)$}
We conclude by classification of the number fields of topological type $(1,1)$
correponding to the once-punctured torus. Corollary \ref{cor1.5} implies the  table in Figure 2.

\begin{remark}
The degenerate case $\alpha\in\mathbf{Q}$ implies $\alpha=\varepsilon=1$ in formulas (\ref{eq1.2}),
so that $\mathbf{K}\cong\mathbf{Q}(0, e^{\frac{2\pi i}{m}})\cong\mathbf{Q}(e^{\frac{2\pi i}{m}})$ is the cyclotomic
field; hence the first row in Figure 2. 
Likewise, the  transcendental functions  $e^{2\pi i\alpha +\log\log\varepsilon}$ ($\cos 2\pi\alpha \times\log\varepsilon$, resp.)
is known to generate the Hilbert class field (maximal abelian unramified extension, resp.)  of the imaginary (real, resp.) quadratic field \cite[Introduction]{Nik1};
  hence the second and third rows in Figure 2. 
\end{remark}

\begin{figure}
\begin{tabular}{|c|c|c|c|}
\hline
&&&\\
$\alpha\in\mathbf{R}$ & $m\in\mathbf{Z}$ & Number field & Theorem\\
&&&\\
\hline
rational & $m\ge 1$ & cyclotomic & Kronecker-Weber\\
\hline
quadratic irrational & $m=1$ & Hilbert class field & complex multiplication\\
  case  $\mathbf{K}\subset\mathbf{C}-\mathbf{R}$    &             & of imaginary quadratic field &\\
\hline
quadratic irrational & $m=1$ & maximal abelian extension & real multiplication\\
  case   $\mathbf{K}\subset\mathbf{R}$                           &             & of real quadratic field &\\
\hline
\end{tabular}

\caption{Number fields of topological type $(1,1)$.}
\end{figure}


\section*{Data availability}
  
  Data sharing not applicable to this article as no datasets were generated or analyzed during the current study.
   
\section*{Conflict of interest}
On behalf of all co-authors, the corresponding author states that there is no conflict of interest.
  

\section*{Funding declaration}
The author was partly supported by the NSF-CBMS grant 2430454.

\bibliographystyle{amsplain}


\end{document}